\newtheorem{thm}{Theorem}
\newtheorem{corollary}{Corollary}
\newtheorem{proposition}{Proposition}
\newtheorem{thm*}{Theorem}
\newtheorem{proposition*}{Proposition}
\theoremstyle{definition}
\begin{document}

\title{Embeddings of Free Magmas with Applications to the Study of Free Non-associative Algebras}

\author{Cameron Ismail}

\address{Department of Mathematics, College of Charleston, 66 George Street, Charleston, South Carolina 29424, USA}

\subjclass[2010]{17A50, 08B20}

\keywords{free algebras, free non-associative algebras, algebraic independence, Schreier variety}

\begin{abstract}
We introduce an embedding of the free magma on a set $A$ into the direct product of the free magma on a singleton set and the free semigroup on $A$.  This embedding is then used to prove several theorems related to algebraic independence of subsets of the free non-associative algebra on $A$.  Among these theorems is a generalization of a result due to Kurosh, which states that every subalgebra of a free non-associative algebra is also free.
\end{abstract}

\maketitle

In universal algebra, a variety of algebras in which subalgebras of free algebras are free is called a \emph{Schreier variety}.  Examples of varieties which are Schreier varieties include the varieties of non-associative algebras, commutative algebras, anticommutative algebras, Lie algebras, and algebras with zero (trivial) multiplication \cite{umirbaev}.  The variety of all associative algebras is an example of a variety which is not Schreier \cite{cohn}.  The discovery of a complete description of all Schreier varieties remains an open problem, and it is this problem that primarily motivates the developments introduced here.  

Among the more recent developments in the study of Schreier varieties is the introduction of the notion of a PBW-pair of varieties by A.A. Mikhalev and I.P. Shestakov \cite{pbwpair}.  This concept generalizes the relationship between the variety of associative algebras and the variety of Lie algebras described by the Poincare-Birkhoff-Witt theorem \cite{shirshov,reutenauer}.  The study of primitive elements of a free algebra, which are elements contained in some free generating set for the free algebra, is also a subject of current research \cite{artamonov,primitive}.

In this paper, we describe an alternative approach to studying free non-associative algebras that is rooted in a particular embedding of a free magma.  After introducing this embedding in Section 2, we prove a few technical results and then, in Section 3, use our embedding to obtain direct sum decompositions of homogeneous subspaces of a free non-associative algebra.  These direct sum decompositions play a crucial role in Section 4, where we prove several results regarding algebraic independence.  Our key result is a generalization of Kurosh's Theorem \cite{kurosh}, which states that the variety of all non-associative algebras is Schreier.  

We point out that the ideas discussed in this paper are heavily-refined versions of those appearing in the author's master's thesis \cite{ismail}.

\section{Preliminaries}

Throughout our entire discussion, we will let $A$ denote a fixed, nonempty set and we will let $\mathscr{M}[A]$ denote the free magma on $A$.  Associated with each element $m$ of $\mathscr{M}[A]$ is a positive integer $\text{deg}(m)$ called the \emph{degree} of $m$.  If $\circ_M$ denotes the product defined on $\mathscr{M}[A]$, then the degree function on $\mathscr{M}[A]$ satisfies
\[ \text{deg}(m_1 \circ_M m_2) = \text{deg}(m_1) + \text{deg}(m_2) \]
for all $m_1, m_2 \in \mathscr{M}[A]$.  The elements of $A$ are the elements of $\mathscr{M}[A]$ with $\text{deg}(m) = 1$.  For $n > 1$, the elements $m$ of $\mathscr{M}[A]$ with $\text{deg}(m) = n$ are of the form $m_1 \circ_M m_2$ where $\text{deg}(m_1) + \text{deg}(m_2) = n$.  (See \cite{bourbaki_algebra}.)  The following proposition states one of the fundamental properties of the operation $\circ_M$.

\begin{proposition}\cite{bourbaki_algebra} \label{prop:magma1}
If $m_1, m'_1, m_2, m'_2 \in \mathscr{M}[A] $, then
\[ m_1 \circ_M m_2 = m'_1 \circ_M m'_2 \text{   if and only if   } m_1 = m'_1 \text{ and } m_2 = m'_2 \,. \]
\end{proposition}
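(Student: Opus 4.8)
The plan is to reduce the statement to pure bookkeeping about disjoint unions by working with the standard concrete realization of $\mathscr{M}[A]$ as an iterated coproduct, the model for which the universal property guarantees that no generality is lost. Recall that one builds $\mathscr{M}[A]$ by setting $\mathscr{M}_1 = A$ and, for $n \geq 2$,
\[ \mathscr{M}_n \;=\; \coprod_{\substack{p + q = n \\ p,\, q \geq 1}} \mathscr{M}_p \times \mathscr{M}_q \,, \]
the coproduct (disjoint union) taken over \emph{ordered} pairs of positive integers summing to $n$, and then $\mathscr{M}[A] = \coprod_{n \geq 1} \mathscr{M}_n$. The degree function assigns $n$ to the elements of $\mathscr{M}_n$, and the product is defined by $m_1 \circ_M m_2 := (m_1, m_2)$, interpreted as an element of the summand $\mathscr{M}_p \times \mathscr{M}_q$ of $\mathscr{M}_{\,p+q}$, where $p = \text{deg}(m_1)$ and $q = \text{deg}(m_2)$. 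One checks that this magma, with the evident inclusion $A \hookrightarrow \mathscr{M}[A]$, satisfies the universal property, so for the purposes of this proposition we may assume $\mathscr{M}[A]$ is of this form.

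The key observation is then that, because the union defining $\mathscr{M}_n$ is \emph{disjoint}, every element of $\mathscr{M}_n$ lies in exactly one summand $\mathscr{M}_p \times \mathscr{M}_q$. Hence the element $m_1 \circ_M m_2$, regarded merely as a member of the set $\mathscr{M}[A]$, already determines the integer $n = \text{deg}(m_1) + \text{deg}(m_2)$ (namely, which $\mathscr{M}_n$ contains it) together with the ordered pair $\big(\text{deg}(m_1), \text{deg}(m_2)\big)$ (namely, which summand of $\mathscr{M}_n$ contains it).

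With this in hand, the nontrivial (``only if'') direction is immediate: if $m_1 \circ_M m_2 = m'_1 \circ_M m'_2$, the previous paragraph forces $\text{deg}(m_1) = \text{deg}(m'_1) =: p$ and $\text{deg}(m_2) = \text{deg}(m'_2) =: q$, so both sides are elements of the single summand $\mathscr{M}_p \times \mathscr{M}_q$, equal there to the ordered pairs $(m_1, m_2)$ and $(m'_1, m'_2)$ respectively; equality of ordered pairs in a Cartesian product yields $m_1 = m'_1$ and $m_2 = m'_2$. The ``if'' direction is trivial, since $\circ_M$ is a function.

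I do not expect a genuine obstacle here; the one point demanding care is the choice of model, since the entire content of the proposition is exactly the disjointness built into the construction — equivalently, the assertion that freeness is precisely what rules out ``accidental'' coincidences $m_1 \circ_M m_2 = m'_1 \circ_M m'_2$ with $(m_1, m_2) \neq (m'_1, m'_2)$. If one insisted on arguing from the universal property without reference to a model, the cleanest route would be to manufacture an auxiliary magma whose elements record full parenthesized expressions over $A$ and transport the equation along the induced homomorphism; but appealing to the explicit construction is by far the most economical approach.
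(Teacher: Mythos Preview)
Your argument is correct. The paper does not actually prove this proposition; it simply cites it from Bourbaki, and your proof is precisely the standard Bourbaki argument, working directly with the explicit realization of $\mathscr{M}[A]$ as $\coprod_{n\ge 1}\mathscr{M}_n$ with $\mathscr{M}_n=\coprod_{p+q=n}\mathscr{M}_p\times\mathscr{M}_q$, where the disjointness of the coproducts immediately yields unique factorization of products.
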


\noindent We will refer to elements of $\mathscr{M}[A]$ as \emph{$A$-monomials}.  Rather than using the symbol $\circ_M$, we will from now on denote the product of two elements $m_1, m_2 \in \mathscr{M}[A]$ by $(m_1, m_2)$.  
\\

As a simple illustration, let $Z = \{z_1, z_2, z_3, z_4\}$ where $z_i \neq z_j$ if $i \neq j$.  The elements of $Z$ are elements of $\mathscr{M}[Z]$ of degree 1.  Products of the elements of $Z$,
\begin{equation} \label{deg2}
(z_1, z_1),\, (z_1, z_2),\, (z_1, z_3),\, (z_2, z_1),\, \ldots 
\end{equation}
are also in $\mathscr{M}[Z]$.  The elements in \eqref{deg2} have degree $2$.  Products of elements in \eqref{deg2} and elements of $Z$, 
\begin{equation} \label{deg3}
((z_1, z_2), z_3), \, (z_1, (z_2, z_3)), \, ((z_2, z_1), z_3), \, \ldots
\end{equation}
are in $\mathscr{M}[Z]$ and have degree $3$.  Products of elements in \eqref{deg3} and elements of $Z$ together with products of pairs of elements in \eqref{deg2} are the elements of $\mathscr{M}[Z]$ having degree $4$.  So, $\mathscr{M}[Z]$ contains the set $Z$, products of elements of $Z$, products of products of $Z$ and elements of $Z$, and so on.  
\\

The \emph{free semigroup} on the set $A$, denoted $\mathscr{S}[A]$, is the set of all finite sequences 
\[ f: \{1, 2, \ldots, n\} \to A \]
of elements of $A$ together with an operation $\circ_S$ defined as follows:
\\
\\
If $f_1 : \{1,2, \ldots, m\} \to A$ and $f_2 : \{1,2, \ldots, n\} \to A$ are elements of $\mathscr{S}[A]$, then 
\[f_1 \circ_S f_2 : \{1,2, \ldots, m + n\} \to A\]
is such that
\[ (f_1 \circ_S f_2)(k) = f_1(k) \]
if $1 \leq k \leq m$ and 
\[ (f_1 \circ_S f_2)(k) = f_2(k - m) \]
if $m+1 \leq k \leq m+n$ \cite{bourbaki_algebra}.
\\

The operation $\circ_S$, which is associative, is called \emph{concatenation}.  It is standard notation to write, for example, the element $f: \{1, 2, 3\} \to Z$ with $f(i) = z_i$ for all $i$ as
\[ z_1 z_2 z_3 \,. \]
If
\[ f: \{1, 2, \ldots, n\} \to A \]
is an element of $\mathscr{S}[A]$, then we say that the \emph{degree} of $f$ is $n$ and we write $\text{deg}(f) = n$.  

The following proposition, which is analogous to Proposition \ref{prop:magma1}, has a straightforward proof. 
\begin{proposition} \label{prop:semigroup1}
Let $f_1, f'_1, f_2, f'_2 \in \mathscr{S}[A] $.  If $\text{deg}(f_1) = \text{deg}(f'_1)$ or $\text{deg}(f_2) = \text{deg}(f'_2)$, then
\[ f_1 \circ_S f_2 = f'_1 \circ_S f'_2 \text{   if and only if   } f_1 = f'_1 \text{ and } f_2 = f'_2 \,. \]
\end{proposition}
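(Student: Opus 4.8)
The plan is to dispatch the "if" direction in one line — if $f_1 = f'_1$ and $f_2 = f'_2$ then $f_1 \circ_S f_2 = f'_1 \circ_S f'_2$ simply because $\circ_S$ is a well-defined binary operation — and then to concentrate on the "only if" direction, which is where the degree hypothesis does the work. So assume $f_1 \circ_S f_2 = f'_1 \circ_S f'_2$. The first thing I would do is observe that two equal elements of $\mathscr{S}[A]$ are, in particular, equal as functions and hence have the same domain; comparing domains gives $\deg(f_1) + \deg(f_2) = \deg(f'_1) + \deg(f'_2)$. Combined with whichever of the two hypotheses is in force — say $\deg(f_1) = \deg(f'_1)$ — this forces $\deg(f_2) = \deg(f'_2)$ as well, and symmetrically if instead $\deg(f_2) = \deg(f'_2)$ is assumed. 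So in either case both pairs of degrees coincide; write $m = \deg(f_1) = \deg(f'_1)$ and $n = \deg(f_2) = \deg(f'_2)$.

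It then remains only to evaluate both sides of $f_1 \circ_S f_2 = f'_1 \circ_S f'_2$ at each $k \in \{1, \ldots, m+n\}$ and read off the definition of $\circ_S$. For $1 \leq k \leq m$ the defining formula gives $f_1(k) = (f_1 \circ_S f_2)(k) = (f'_1 \circ_S f'_2)(k) = f'_1(k)$, and since $f_1$ and $f'_1$ share the domain $\{1, \ldots, m\}$ this yields $f_1 = f'_1$. For $m+1 \leq k \leq m+n$ the formula gives $f_2(k-m) = (f_1 \circ_S f_2)(k) = (f'_1 \circ_S f'_2)(k) = f'_2(k-m)$; as $k$ runs over $\{m+1, \ldots, m+n\}$ the index $k-m$ runs over all of $\{1, \ldots, n\}$, the common domain of $f_2$ and $f'_2$, so $f_2 = f'_2$.

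I do not expect a genuine obstacle here; the only point requiring care is to make sure the degree hypothesis is actually used and not quietly assumed elsewhere. Without it one could have, for instance, $a \circ_S (bc) = (ab) \circ_S c$ with $a, b, c \in A$, so some condition pinning down where the "cut" in the concatenated word occurs is indispensable — and equality of domains is precisely the mechanism that lets the single assumed degree coincidence propagate to the other factor.
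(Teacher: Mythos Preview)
Your proof is correct; the paper itself omits the argument entirely, remarking only that the proposition ``has a straightforward proof.'' Your verification --- comparing domains to propagate the degree hypothesis to both factors, then reading off the definition of $\circ_S$ pointwise --- is precisely the straightforward argument the paper alludes to.
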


For the full remainder of our discussion, we let $\mathbb{F}$ denote a fixed, arbitrarily chosen field.  If $\mathbb{F}[\mathscr{M}[A]]$ is the free $\mathbb{F}$-vector space over the set $\mathscr{M}[A]$, then let 
\[ \circ_N : \mathbb{F}[\mathscr{M}[A]] \times \mathbb{F}[\mathscr{M}[A]] \to \mathbb{F}[\mathscr{M}[A]] \]
be the unique bilinear product on $\mathbb{F}[\mathscr{M}[A]]$ which satisfies $m_1 \circ_N m_2 = m_1 \circ_M m_2$ for all $m_1, m_2 \in \mathscr{M}[A]$.  The vector space $\mathbb{F}[\mathscr{M}[A]]$ together with the operation $\circ_N$ is an $\mathbb{F}$-algebra, which we will denote by $\mathscr{N}[A]$.  $\mathscr{N}[A]$ is called the \emph{free (non-associative) $\mathbb{F}$-algebra on the set $A$} \cite{bourbaki_algebra}.  Rather than using the symbol $\circ_N$, we will from now on denote the product of two elements $p_1, p_2 \in \mathscr{N}[A]$ by $(p_1, p_2)$.  If $\{a_1, a_2, \ldots, a_n\}$ is a finite set of $n$ elements, then we will denote the free non-associative algebra $\mathscr{N}[\{a_1, a_2, \ldots, a_n\}]$ by $\mathscr{N}[a_1, a_2, \ldots, a_n]$.   

Under the obvious identification, $\mathscr{M}[A]$ is a subset of $\mathscr{N}[A]$ that is a basis for the vector space $\mathscr{N}[A]$.  We will call elements of $\mathscr{N}[A]$ $A$-polynomials.  Thus, every $A$-polynomial can be expressed uniquely as a linear combination of $A$-monomials.  For a nonzero $A$-polynomial $p$, if
\[ p = \sum_{i = 1}^k c_i n_i \]
for nonzero $c_i \in \mathbb{F}$ and $A$-monomials $n_i$, where $n_i \neq n_j$ if $i \neq j$, then the $A$-monomials $n_i$ are called the \emph{monomial terms} of $p$.  The \emph{degree} of an $A$-polynomial $p$, denoted $\text{deg}(p)$, is defined to be the degree of the highest-degree monomial term of $p$.  The degree of the zero $A$-polynomial is left undefined.  If $p_1$ and $p_2$ are any nonzero $A$ polynomials, then we have
\[ \text{deg}((p_1, p_2)) = \text{deg}(p_1) + \text{deg}(p_2) \,. \]

If, for example, $\mathbb{F} = \mathbb{Q}$ and $Z$ is as given previously, then
\[ p_1 = ((z_2, z_1), (z_4, z_4)) + 2(z_1, z_1) \,,\]
and
\[ p_2 = 4 (z_3, (z_1, z_1)) + z_2 + 3 z_3 \,,\]
are elements of the free $\mathbb{Q}$-algebra over the set $Z$.  We see that $\text{deg}(p_1) = 4$ and $\text{deg}(p_2) = 3$.
\\

For a positive integer $n$, let $\{X_1, \ldots, X_n\}$ be a finite set of $n$ indeterminates.  We will refer to elements of $\mathscr{M}[X_1, \ldots, X_n]$ and $\mathscr{N}[X_1, \ldots, X_n]$ as simply \emph{monomials} and \emph{polynomials}, respectively.

If $\mathscr{A}$ is an $\mathbb{F}$-algebra, then for any $n$-tuple $(y_1, \ldots, y_n)$ of elements $y_1, \ldots, y_n \in \mathscr{A}$, there is a unique algebra homomorphism 
\[ \phi_{(y_1, \ldots, y_n)} : \mathscr{N}[X_1, \ldots, X_n] \to \mathscr{A} \]
with 
\[ \phi_{(y_1, \ldots, y_n)} (X_i) = y_i \]
for all $1 \leq i \leq n$.  For a polynomial $P = P(X_1, \ldots, X_n)$, we will write
\[ \phi_{(y_1, \ldots, y_n)} (P) = P(y_1, \ldots, y_n) \,. \]
If $P$ is a polynomial, then we will write $P = P(X_1, \ldots, X_n)$ to express that $P$ is in $\mathscr{N}[X_1, \ldots, X_n]$.  

For a subset $G$ of $\mathscr{A}$, we will let $\langle G \rangle$ denote the subalgebra of $\mathscr{A}$ generated by $G$.  It is easy to verify that $\langle G \rangle$ can be described as the set of all elements of $\mathscr{A}$ of the form $P(g_1, \ldots, g_n)$,  where, for some positive integer $n$, $P \in \mathscr{N}[X_1, \ldots, X_n]$ and $g_1, \ldots , g_n \in G$.

\section{Sequence Type and Product Type}

The following theorem describes an embedding of the free magma $\mathscr{M}[A]$ into the direct product of two magmas which, unless $A$ is a singleton set, are both easier to work with than $\mathscr{M}[A]$.  An intuitive description of this embedding is given after the proof of the theorem.

\begin{thm} \label{thm:embedding}
Let $\mathscr{S}[A]$ be the free semigroup on $A$, let $\mathscr{M}[X]$ be the free magma on a singleton set $\{X\}$, and let $\pi_1$ and $\pi_2$ denote the canonical projections of the direct product of magmas $\mathscr{M}[X] \times \mathscr{S}[A]$ onto $\mathscr{M}[X]$ and $\mathscr{S}[A]$, respectively.  There is a unique injective magma morphism $\phi$ of the free magma $\mathscr{M}[A]$ into $\mathscr{M}[X] \times \mathscr{S}[A]$ such that 
\\

\begin{center}
\begin{varwidth}{\textwidth}
\begin{enumerate}[label=(\roman*)]
\item $\pi_1 \circ \phi$ is surjective and constant on the subset $A$ of $\mathscr{M}[A]$ and
\item $\pi_2 \circ \phi$ restricted to $A$ is the inclusion map $\iota_A: A \to \mathscr{S}[A]$. 
\\
\end{enumerate}
\end{varwidth}
\end{center}
Up to isomorphism, $\mathscr{M}[X]$ is the unique magma with this property.  
\end{thm}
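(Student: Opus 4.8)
The plan is to define $\phi$ recursively on degree, prove injectivity and the magma-morphism property by induction, verify (i) and (ii) directly from the construction, and finally establish uniqueness in two stages: first the uniqueness of $\phi$ itself (given that the target is $\mathscr{M}[X]\times\mathscr{S}[A]$), and then the uniqueness up to isomorphism of the magma $\mathscr{M}[X]$. For the construction, I would set $\phi(a) = (X, a)$ for every $a \in A$, where $a$ on the right denotes the one-term sequence, and then extend by $\phi((m_1, m_2)) = \bigl(\,(\pi_1\phi(m_1),\, \pi_1\phi(m_2)),\ \pi_2\phi(m_1)\circ_S \pi_2\phi(m_2)\,\bigr)$. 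Because every $A$-monomial of degree $>1$ has, by Proposition \ref{prop:magma1}, a unique factorization as $(m_1, m_2)$, this recursion is well-defined; and it is forced, since any magma morphism must send $(m_1,m_2)$ to $\phi(m_1)\cdot\phi(m_2)$ and the product in $\mathscr{M}[X]\times\mathscr{S}[A]$ is computed componentwise. That observation already gives the uniqueness of $\phi$ among magma morphisms satisfying (i) and (ii): condition (ii) pins down $\pi_2\phi$ on $A$, condition (i) forces $\pi_1\phi$ to be constant on $A$ with value the unique degree-one element of $\mathscr{M}[X]$, namely $X$, and the morphism property then determines $\phi$ on all of $\mathscr{M}[A]$ by induction on degree.

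For injectivity, I would induct on $\operatorname{deg}(m) + \operatorname{deg}(m')$. If $\phi(m) = \phi(m')$ and one of them has degree $1$, then comparing the $\pi_1$-components (an element of $A$ maps to $X$, which is not a $\circ_M$-product in $\mathscr{M}[X]$, whereas a higher-degree monomial maps to a genuine product by Proposition \ref{prop:magma1} applied in $\mathscr{M}[X]$) forces both to have degree $1$, and then the $\pi_2$-components give $m = m'$ as one-term sequences. If both have degree $>1$, write $m = (m_1, m_2)$, $m' = (m'_1, m'_2)$; the $\pi_2$-components give $\pi_2\phi(m_1)\circ_S\pi_2\phi(m_2) = \pi_2\phi(m'_1)\circ_S\pi_2\phi(m'_2)$ in $\mathscr{S}[A]$, and I claim $\operatorname{deg}(\pi_2\phi(m_i)) = \operatorname{deg}(m_i)$ — an immediate induction using that $\pi_2\phi$ is additive over $\circ_M$ and sends each generator to a length-one sequence. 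Hence by Proposition \ref{prop:semigroup1} the sequence factorizations match, so $\pi_2\phi(m_i) = \pi_2\phi(m'_i)$; combining with the $\pi_1$-component equation $(\pi_1\phi(m_1),\pi_1\phi(m_2)) = (\pi_1\phi(m'_1),\pi_1\phi(m'_2))$ and Proposition \ref{prop:magma1} in $\mathscr{M}[X]$ gives $\phi(m_i) = \phi(m'_i)$, and the induction hypothesis yields $m_i = m'_i$, hence $m = m'$. Surjectivity of $\pi_1\phi$ follows from a separate easy induction: the image contains $X$, and if it contains $u, v \in \mathscr{M}[X]$ then, choosing $m, m'$ with $\pi_1\phi(m) = u$, $\pi_1\phi(m') = v$, it contains $\pi_1\phi((m,m')) = (u,v)$, so it is all of $\mathscr{M}[X]$.

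The last sentence — that $\mathscr{M}[X]$ is, up to isomorphism, the unique magma that can play this role — is the part I expect to require the most care in phrasing, since it is really a statement about the first factor of the target. Suppose $\mathscr{G}$ is any magma for which there exists an injective magma morphism $\psi: \mathscr{M}[A] \to \mathscr{G}\times\mathscr{S}[A]$ satisfying the analogues of (i) and (ii) with $\mathscr{M}[X]$ replaced by $\mathscr{G}$. Condition (i) says $\rho_1\psi$ is surjective onto $\mathscr{G}$ and constant on $A$; let $g_0 \in \mathscr{G}$ be that constant value. I would then show $g_0$ generates $\mathscr{G}$ (surjectivity of $\rho_1\psi$ plus the morphism property, exactly as in the surjectivity argument above) and that the map $\mathscr{M}[X] \to \mathscr{G}$ induced by $X \mapsto g_0$ is injective, by transporting the factorization-uniqueness through $\psi$: given a relation in $\mathscr{G}$ among products of $g_0$, lift it to the $\rho_1$-component of elements of $\mathscr{M}[A]$ with prescribed, distinct $\rho_2$-components (available because $\mathscr{S}[A]$ is genuinely free and, by hypothesis, $\rho_2\psi$ restricts to the inclusion on $A$), and use injectivity of $\psi$ together with Propositions \ref{prop:magma1} and \ref{prop:semigroup1} to conclude the two sides of the relation had identical formal shape. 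Since the canonical map $\mathscr{M}[X]\to\mathscr{G}$ is thus a surjective (as $g_0$ generates) and injective magma morphism, it is an isomorphism, which is the claim. I would close by remarking that this is unsurprising: $\mathscr{M}[X]$ is the free magma on one generator, hence the "smallest" magma admitting a surjection from it is forced once we demand that the first coordinate of $\phi$ collapse all of $A$ to a single generating element.
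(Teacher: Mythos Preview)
Your construction, the verification of (i) and (ii), and the uniqueness of $\phi$ all match the paper's argument. In the injectivity step, however, there is an ordering slip: you invoke Proposition~\ref{prop:semigroup1} on the $\pi_2$-components before you know that $\deg(\pi_2\phi(m_1)) = \deg(\pi_2\phi(m'_1))$. Your degree claim gives $\deg(\pi_2\phi(m_1)) = \deg(m_1)$ and $\deg(\pi_2\phi(m'_1)) = \deg(m'_1)$, but nothing yet forces $\deg(m_1) = \deg(m'_1)$, so the hypothesis of Proposition~\ref{prop:semigroup1} is not available. The fix is to use Proposition~\ref{prop:magma1} on the $\pi_1$-components \emph{first}: from $(\pi_1\phi(m_1),\pi_1\phi(m_2)) = (\pi_1\phi(m'_1),\pi_1\phi(m'_2))$ you get $\pi_1\phi(m_i) = \pi_1\phi(m'_i)$, hence equal degrees on the $\mathscr{M}[X]$ side, hence (via the same kind of degree identity) $\deg(m_i) = \deg(m'_i)$, and \emph{then} Proposition~\ref{prop:semigroup1} applies. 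This is exactly the order the paper uses, mediated by its equation~\eqref{eq:embedding}.

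For the final uniqueness claim the paper takes a shorter route than yours: rather than lifting relations through the given $\psi$, it applies the assumed property of $M'$ with the singleton $\{X\}$ in the role of $A$, obtaining an injective $\phi':\mathscr{M}[X]\to M'\times\mathscr{S}[X]$, and then checks that $\pi'_1\circ\phi'$ is the desired isomorphism. Your direct approach can be made to work but needs two corrections. First, ``prescribed, distinct $\rho_2$-components'' is backwards: to exploit injectivity of $\psi$ you want $m,m'\in\mathscr{M}[A]$ with $\rho_1\psi(m)=\rho_1\psi(m')$ (forced by the relation in $\mathscr{G}$) and $\rho_2\psi(m)=\rho_2\psi(m')$, so that $\psi(m)=\psi(m')$ yields $m=m'$ and hence $t=t'$. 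Second, that maneuver only works when $\deg(t)=\deg(t')$; for unequal degrees you need one more idea, e.g.\ observe that if $f(t)=f(t')$ then $f((t,t'))=f((t',t))$ with $(t,t')\neq(t',t)$ of the \emph{same} degree, reducing to the case already handled.
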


\begin{proof}
Let $\phi : \mathscr{M}[A] \to \mathscr{M}[X] \times \mathscr{S}[A]$ be the unique magma morphism with $\phi(a) = (X, a)$ for all $a \in A$.  It is easy to see that $\phi$ satisfies the two given conditions.  A straightforward induction argument shows that
\begin{equation} \label{eq:embedding}
\text{deg}(m) = \text{deg}(\pi_1(\phi(m))) = \text{deg}(\pi_2(\phi(m))) 
\end{equation}
for all $m \in \mathscr{M}[A]$.

We now prove injectivity of $\phi$ by induction.  If $m, m' \in \mathscr{M}[A]$ both have degree 1 (so $m, m' \in A$) and $m \neq m'$, then clearly $\phi(m) \neq \phi(m')$.  Suppose that $n \geq 1$ is such that for any $m, m' \in \mathscr{M}[A]$ with $m \neq m'$ and $1 \leq \text{deg}(m), \text{deg}(m') \leq n$, we have $\phi(m) \neq \phi(m')$.  If $m, m' \in \mathscr{M}[A]$ are such that $m \neq m'$ and $1 \leq \text{deg}(m), \text{deg}(m') \leq n+1$, we have $m = (m_1, m_2)$ and $m' = (m'_1, m'_2)$ for some $m_i, m'_i \in \mathscr{M}[A]$ with $1 \leq \text{deg}(m_i), \text{deg}(m'_i) \leq n$.  Since $m \neq m'$, we have $m_1 \neq m'_1$ or $m_2 \neq m'_2$.  Without loss of generality, assume that $m_1 \neq m'_1$.  By the induction hypothesis, we have $\phi(m_1) \neq \phi(m'_1)$ and so $ \pi_1(\phi(m_1)) \neq \pi_1(\phi(m'_1)) $ or $\pi_2(\phi(m_1)) \neq \pi_2(\phi(m'_1))$.  If $ \pi_1(\phi(m_1)) \neq \pi_1(\phi(m'_1)) $, then 
\[ \pi_1(\phi(m)) = (\pi_1(\phi(m_1)), \pi_1(\phi(m_2))) \neq (\pi_1(\phi(m'_1)), \pi_1(\phi(m'_2))) = \pi_1(\phi(m')) \]
by Proposition \ref{prop:magma1}.  If $ \pi_1(\phi(m_1)) = \pi_1(\phi(m'_1)) $, then we must have 
\[ \text{deg}(\pi_2(\phi(m_1))) = \text{deg}(\pi_2(\phi(m'_1))) \]
by \eqref{eq:embedding}.  If, in addition, we have $ \pi_2(\phi(m_1)) \neq \pi_2(\phi(m'_1)) $, then 
\[ \pi_2(\phi(m)) = (\pi_2(\phi(m_1)), \pi_2(\phi(m_2))) \neq (\pi_2(\phi(m'_1)), \pi_2(\phi(m'_2))) = \pi_2(\phi(m')) \]   
by Proposition \ref{prop:semigroup1}.  In either case, we have $\phi(m) \neq \phi(m')$.  Injectivity of $\phi$ now follows easily.

We now show uniqueness of $\phi$.  If $\psi$ is any such function, then, because $\pi_1 \circ \psi$ is both surjective and constant on $A$,  $(\pi_1 \circ \psi)(A)$ is a one-element generating set for $\mathscr{M}[X]$.  This implies that $(\pi_1 \circ \psi)(A) = \{X\}$ or, equivalently, that $(\pi_1 \circ \psi)(a) = X$ for every $a$ in $A$.  Combining this together with the fact that $\pi_2 \circ \psi$ is the inclusion map from $A$ into $\mathscr{S}[A]$, we see that
\[ \psi(a) = ((\pi_1 \circ \psi)(a), (\pi_2 \circ \psi)(a)) = (X, a) \]
for each $a$ in $A$.  Therefore, we must have $\psi = \phi$.

Assume that there is a magma $M'$ that can be used in place of $\mathscr{M}[X]$ in Theorem \ref{thm:embedding}.  Let
\[ \phi' : \mathscr{M}[X] \to M' \times \mathscr{S}[X] \]
be the unique injective magma morphism described in the statement of the theorem.  It is easily verified that $\pi'_1 \circ \phi'$, where $\pi'_1$ is the canonical projection of $M' \times \mathscr{S}[X]$ onto $M'$, is a magma isomorphism of $\mathscr{M}[X]$ with $M'$.
\end{proof}

To illustrate, let $Z = \{z_1, z_2, z_3, z_4\}$, let $\pi_{Z,1}$ and $\pi_{Z,2}$ be the canonical projections of $\mathscr{M}[X] \times \mathscr{S}[Z]$ onto $\mathscr{M}[X]$ and $\mathscr{S}[Z]$, respectively, and let $\phi_Z$ be the embedding described in Theorem \ref{thm:embedding}.  If $m = (z_2, (z_3, (z_1, z_4)))$, then 
\[ (\pi_{Z,1} \circ \phi_Z)(m) = (X, (X, (X, X))) \, \in \mathscr{M}[X] \]
and
\[ (\pi_{Z,2} \circ \phi_Z)(m) =  z_2 z_3 z_1 z_4 \, \in \mathscr{S}[Z]. \]
This example illustrates a general principle regarding the functions $\pi_1 \circ \phi$ and $\pi_2 \circ \phi$ appearing in Theorem \ref{thm:embedding}.  If an element of $\mathscr{M}[A]$ is expressed symbolically as in this example, then the function $\pi_1 \circ \phi$ encodes the arrangement of parentheses and the function $\pi_2 \circ \phi$ encodes the sequence obtained if all parentheses are removed.  From now on, we will write $\pi_1 \circ \phi$ as $\Pi$ and $\pi_2 \circ \phi$ as $\Sigma$.  For a given $m$ in $\mathscr{M}[A]$, we will refer to $\Pi(m)$ and $\Sigma(m)$ as the \emph{product type} of $m$ and the \emph{sequence type} of $m$, respectively.

We point out that the core idea underlying the embedding of Theorem \ref{thm:embedding} is by no means a novel one.  If you regard, as a computer scientist might, the free magma on $A$ as the set of complete, planar, rooted binary trees with leaves labelled by elements of $A$, then for $m$ in $\mathscr{M}[A]$, $\Pi(m)$ gives the corresponding binary tree and $\Sigma(m)$ gives the labelling of leaves.  It is our hope that the framework provided by this embedding will lend itself well to effective computation in free non-associative algebras, and will ultimately enable us to discover a wealth of new Schreier varieties.  It is the author's view that the proofs of results presented in this paper serve as a testament to the promise of this approach.  
\\

The next proposition utilizes the notion of product type.  It will be used in Section 4.
\begin{proposition} \label{prop:differentpt_eval}
Let $m_1, \ldots, m_n \in \mathscr{M}[A]$ be of the same degree, and let $M = M(X_1, \ldots, M_n)$ and $M' = M'(X_1, \ldots, X_n)$ be monomials.  If $\Pi(M) \neq \Pi(M')$, then
\[ \Pi(M(m_1, \ldots, m_n)) \neq \Pi(M'(m_1, \ldots, m_n)) \,. \]
\end{proposition}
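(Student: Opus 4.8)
The plan is to argue by induction on $\text{deg}(M) + \text{deg}(M')$, after first recording two routine facts. The evaluation map $M \mapsto M(m_1, \ldots, m_n)$ is a magma morphism $\mathscr{M}[X_1, \ldots, X_n] \to \mathscr{M}[A]$, and $\Pi$ (which is defined for the free magma on any set, including $\mathscr{M}[X_1, \ldots, X_n]$, exactly as for $\mathscr{M}[A]$) is a composite of magma morphisms; hence both of these maps commute with the magma product. In particular, if $M = (M_1, M_2)$ then $M(m_1, \ldots, m_n) = (M_1(m_1, \ldots, m_n), M_2(m_1, \ldots, m_n))$, and $\Pi((p_1, p_2)) = (\Pi(p_1), \Pi(p_2))$. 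Second, writing $d$ for the common degree of $m_1, \ldots, m_n$, a short induction on $\text{deg}(M)$ using additivity of the degree function shows that $\text{deg}(M(m_1, \ldots, m_n)) = d \cdot \text{deg}(M)$; note that $d \geq 1$ since every element of $\mathscr{M}[A]$ has degree at least $1$.

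With these in place, set $N = \text{deg}(M) + \text{deg}(M')$. If $\text{deg}(M) \neq \text{deg}(M')$, then the degree identity gives $\text{deg}(M(m_1, \ldots, m_n)) \neq \text{deg}(M'(m_1, \ldots, m_n))$, so by \eqref{eq:embedding} the two product types already differ in degree and are therefore unequal. If $\text{deg}(M) = \text{deg}(M') = 1$, then $M$ and $M'$ are variables and $\Pi(M) = X = \Pi(M')$, so the hypothesis is vacuous. In the remaining case $\text{deg}(M) = \text{deg}(M') = k \geq 2$, write $M = (M_1, M_2)$ and $M' = (M'_1, M'_2)$; then $\Pi(M) = (\Pi(M_1), \Pi(M_2))$ and $\Pi(M') = (\Pi(M'_1), \Pi(M'_2))$, so $\Pi(M) \neq \Pi(M')$ together with Proposition \ref{prop:magma1} forces $\Pi(M_1) \neq \Pi(M'_1)$ or $\Pi(M_2) \neq \Pi(M'_2)$; assume the former without loss of generality. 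Since $\text{deg}(M_2), \text{deg}(M'_2) \geq 1$, we have $\text{deg}(M_1) + \text{deg}(M'_1) < N$, so the inductive hypothesis applies to $M_1, M'_1$ and yields $\Pi(M_1(m_1, \ldots, m_n)) \neq \Pi(M'_1(m_1, \ldots, m_n))$. Feeding this into $\Pi(M(m_1, \ldots, m_n)) = (\Pi(M_1(m_1, \ldots, m_n)), \Pi(M_2(m_1, \ldots, m_n)))$ and the corresponding expression for $M'$, and invoking Proposition \ref{prop:magma1} once more, completes the induction.

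I expect the only genuinely delicate point to be the role of the equal-degree hypothesis on $m_1, \ldots, m_n$: it is exactly what makes the identity $\text{deg}(M(m_1, \ldots, m_n)) = d \cdot \text{deg}(M)$ available, and hence what handles the case $\text{deg}(M) \neq \text{deg}(M')$. This hypothesis cannot be dropped — substituting monomials of unequal degrees can identify distinct product types — so it is where the real content of the proposition lies; the remaining steps are bookkeeping with Proposition \ref{prop:magma1} and with the fact that $\Pi$ preserves both products and degree.
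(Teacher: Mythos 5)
Your proof is correct and follows essentially the same route as the paper's: induct, split $M$ and $M'$ into factors, use Proposition \ref{prop:magma1} to pass the hypothesis to a pair of factors, apply the inductive hypothesis, and use Proposition \ref{prop:magma1} again. The only real difference is cosmetic but to your credit: you induct on $\text{deg}(M)+\text{deg}(M')$ and handle the case $\text{deg}(M)\neq\text{deg}(M')$ explicitly via the identity $\text{deg}(M(m_1,\ldots,m_n))=d\cdot\text{deg}(M)$ and \eqref{eq:embedding}, whereas the paper dismisses the corresponding case ($M'$ of degree $1$) as trivial; your remark that this is precisely where the equal-degree hypothesis on $m_1,\ldots,m_n$ is indispensable is accurate.
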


\begin{proof}
The proof is by induction on deg($M$).  The result is vacuously true if $\text{deg}(M) = 1$.  For some $k > 1$, assume that the result holds if $\text{deg}(M) = j$ for any $j$ with $1 \leq j < k$ and assume now that $\text{deg}(M) = k$.  If $M'$ is a monomial of degree 1, then the result is trivial.  If $M'$ is of degree greater than one and if $M = (M_1, M_2)$ and $M' = (M'_1, M'_2)$ for monomials $M_i, M'_i$, then (since $\Pi(M) \neq \Pi(M')$) we must have $\Pi(M_1) \neq \Pi(M'_1)$ or $\Pi(M_2) \neq \Pi(M'_2)$.  Without any loss of generality, assume that $\Pi(M_1) \neq \Pi(M'_1)$.  By the induction hypothesis, we have $\Pi(M_1(m_1, \ldots, m_n)) \neq \Pi(M'_1(m_1, \ldots, m_n))$.  Since
\[ \Pi(M(m_1, \ldots, m_n)) = (\Pi(M_1(m_1, \ldots, ,m_n)), \Pi(M_2(m_1, \ldots, m_n))) \]
and
\[ \Pi(M'(m_1, \ldots, m_n)) = (\Pi(M'_1(m_1, \ldots, m_n)), \Pi(M'_2(m_1, \ldots, m_n))) \,, \]
Proposition \ref{prop:magma1} implies that 
\[ \Pi(M(m_1, \ldots, m_n)) \neq \Pi(M'(m_1, \ldots, m_n)) \,. \]
\end{proof}

\section{Direct Sum Decompositions}

For each positive integer $n$, let $\mathscr{H}_n[A]$ be the subspace of $\mathscr{N}[A]$ generated by the $A$-monomials of degree $n$.  We have
\begin{equation*}
\mathscr{N}[A] = \bigoplus_{n = 1}^{\infty} \mathscr{H}_n[A] \,, 
\end{equation*}
and $\mathscr{N}[A]$ is a graded $\mathbb{F}$-algebra under this decomposition.  For each $n$,  $\mathscr{H}_n[A]$ is called the \emph{homogeneous component of $\mathscr{N}[A]$ of degree $n$}, and nonzero elements of $\mathscr{H}_n[A]$ are called \emph{homogeneous $A$-polynomials of degree $n$} \cite{bourbaki_algebra}.
\\

Thus, if $Z = \{z_1, z_2, z_3, z_4\}$ and $\mathbb{F} = \mathbb{Q}$, then the $Z$-polynomial
\[ p_1 = (z_2, z_1) + 4 (z_1, z_4) + 5 (z_2, z_2) + (z_1, z_3) \]
is homogeneous of degree 2, and the $Z$-polynomial
\[ p_2 = (((z_1, z_3), z_4), z_2) + 2 (((z_3, z_2), z_2), z_2) + (z_3, (z_1, (z_2, z_2))) \]
is homogeneous of degree 4.  
\\

For the remainder of our work, for any $n$, $\pi_n$ will denote the canonical projection map
\[ \pi_n : \mathscr{N}[A] \to \mathscr{H}_n[A] \,. \]
For any $A$-polynomial $p$, $\pi_n(p)$ will be called the \emph{homogeneous component of $p$ of degree $n$}.

The following proposition states that homogeneity is preserved when homogeneous $A$-polynomials are substituted into a monomial $M$. 
\begin{proposition} \label{prop:eval_degree}
If $M(X_1, \ldots, X_n)$ is a monomial and if $h_1, \ldots, h_n$ are nonzero homogeneous $A$-polynomials, then $M(h_1, \ldots, h_n)$ is a homogeneous $A$-polynomial with
\[ \text{deg}(M(h_1, \ldots, h_n)) = \sum\limits_{i=1}^n \text{deg}(M_i(h_1, \ldots, h_n)) \,,\]
where $M_i = (\Sigma(M))(i)$ for all $i$.  In particular, if $\text{deg}(h_i) = k$ for all $i$, then $M(h_1, \ldots, h_n)$ is a homogeneous $A$-polynomial of degree $k(\text{deg}(M))$.
\end{proposition}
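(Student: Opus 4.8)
The plan is to induct on $\text{deg}(M)$, using two facts already available. First, the evaluation map $\phi_{(h_1,\dots,h_n)}\colon \mathscr{N}[X_1,\dots,X_n]\to\mathscr{N}[A]$ is an algebra homomorphism, so if $M=(M',M'')$ as a magma product of monomials then $M(h_1,\dots,h_n)=\big(M'(h_1,\dots,h_n),\,M''(h_1,\dots,h_n)\big)$. Second, $\Sigma$ is a magma morphism, so the sequence type of a product is the concatenation of the sequence types of its factors: $\Sigma((m_1,m_2))=\Sigma(m_1)\circ_S\Sigma(m_2)$. I also record the auxiliary fact that the product $(p_1,p_2)$ of two nonzero homogeneous $A$-polynomials, of degrees $d_1$ and $d_2$, is a nonzero homogeneous $A$-polynomial of degree $d_1+d_2$. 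Indeed, expanding $p_1$ and $p_2$ over the monomial basis and using bilinearity, every monomial term of $(p_1,p_2)$ has the form $(n,n')$ with $n$ of degree $d_1$ and $n'$ of degree $d_2$, hence of degree $d_1+d_2$, so $(p_1,p_2)\in\mathscr{H}_{d_1+d_2}[A]$; and by Proposition~\ref{prop:magma1} the monomials $(n,n')$ are pairwise distinct, so no cancellation occurs and $(p_1,p_2)\neq 0$. (Alternatively, one may cite the identity $\text{deg}((p_1,p_2))=\text{deg}(p_1)+\text{deg}(p_2)$ from Section~1 to see that the product is nonzero, and the grading of $\mathscr{N}[A]$ to see that it is homogeneous.)

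With these in hand, the base case $\text{deg}(M)=1$ is immediate: then $M=X_j$ for a unique $j$, so $M(h_1,\dots,h_n)=h_j$ is homogeneous by hypothesis, $\Sigma(M)$ is the length-one sequence whose single entry is $X_j$, and the claimed formula reads $\text{deg}(h_j)=\text{deg}(h_j)$. For the inductive step, suppose $\text{deg}(M)=d>1$ and write $M=(M',M'')$ with $M'$ and $M''$ monomials of degree strictly less than $d$; set $d'=\sum_{i=1}^{\text{deg}(M')}\text{deg}\big((\Sigma(M'))(i)(h_1,\dots,h_n)\big)$ and define $d''$ analogously for $M''$. By the homomorphism property $M(h_1,\dots,h_n)=\big(M'(h_1,\dots,h_n),M''(h_1,\dots,h_n)\big)$, and by the induction hypothesis $M'(h_1,\dots,h_n)$ and $M''(h_1,\dots,h_n)$ are nonzero homogeneous $A$-polynomials of degrees $d'$ and $d''$ respectively; hence the auxiliary fact makes $M(h_1,\dots,h_n)$ a nonzero homogeneous $A$-polynomial of degree $d'+d''$. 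Finally, since $\Sigma(M)=\Sigma(M')\circ_S\Sigma(M'')$, the sequence $\Sigma(M)$ agrees with $\Sigma(M')$ on its first $\text{deg}(M')$ positions and with $\Sigma(M'')$ on the remaining $\text{deg}(M'')$ positions, whence $d'+d''=\sum_{i=1}^{\text{deg}(M)}\text{deg}\big((\Sigma(M))(i)(h_1,\dots,h_n)\big)$, which is exactly the value asserted in the proposition. This closes the induction, and the ``in particular'' clause follows at once: when $\text{deg}(h_i)=k$ for every $i$, each of the $\text{deg}(M)$ summands equals $k$, so the degree is $k\,\text{deg}(M)$.

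I do not expect any step to be genuinely hard. The only places needing care are the auxiliary fact — where freeness of the magma, in the form of Proposition~\ref{prop:magma1}, is precisely what prevents cancellation in $(p_1,p_2)$ — and the mild bookkeeping of observing that each $(\Sigma(M))(i)$ is literally one of the indeterminates $X_j$, so that the symbol $(\Sigma(M))(i)(h_1,\dots,h_n)$ stands for $h_j$ and the right-hand sum is well defined.
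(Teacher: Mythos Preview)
Your proof is correct and follows essentially the same approach as the paper's: induction on $\text{deg}(M)$, splitting $M=(M',M'')$, applying the induction hypothesis to each factor, and using that $\Sigma(M)=\Sigma(M')\circ_S\Sigma(M'')$ to reassemble the sum. The only difference is that you are slightly more explicit about why $M(h_1,\ldots,h_n)$ is nonzero, invoking Proposition~\ref{prop:magma1} to rule out cancellation, whereas the paper simply appeals to the degree identity $\text{deg}((p_1,p_2))=\text{deg}(p_1)+\text{deg}(p_2)$ from Section~1.
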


\begin{proof}
If $\text{deg}(M) = 1$, then the result is trivial.  Assume that for some $n \geq 1$, the result holds for all monomials of degree $n$ and suppose now that $\text{deg}(M) = n + 1$.  If $M'$ and $M''$ are monomials such that $M = (M', M'')$, then we have
\begin{align*}
\text{deg}(M(h_1, \ldots, h_n)) &= \text{deg}(M'(h_1, \ldots, h_n), M''(h_1, \ldots, h_n)) \\
&= \text{deg}(M'(h_1, \ldots, h_n)) + \text{deg}(M''(h_1, \ldots, h_n)) \\
&= \sum\limits_{i=1}^{d'} \text{deg}(M'_i(h_1, \ldots, h_n)) + \sum\limits_{i=1}^{n + 1 - d'} \text{deg}(M''_i(h_1, \ldots, h_n)) 
\end{align*}
where $d' = \text{deg}(M')$.  Under the concatenation operation on $\mathscr{S}[A]$, we have 
\[ (\Sigma(M'))(j) = (\Sigma(M))(j) \]
for all $j$ with $1 \leq j \leq d'$ and
\[ (\Sigma(M''))(j) = (\Sigma(M))(d' + j) \]
for all $j$ with $1 \leq j \leq n + 1 - d'$.  So, 
\[ \sum\limits_{i=1}^{d'} \text{deg}(M'_i(h_1, \ldots, h_n)) =  \sum\limits_{i=1}^{d'} \text{deg}(M_i(h_1, \ldots, h_n)) \]
and
\[ \sum\limits_{i=1}^{n + 1 - d'} \text{deg}(M''_i(h_1, \ldots, h_n)) =  \sum\limits_{i = d' + 1}^{n + 1} \text{deg}(M_i(h_1, \ldots, h_n)) \,. \]
Therefore,
\begin{align*}
\text{deg}(M(h_1, \ldots, h_n)) &= \sum\limits_{i=1}^{d'} \text{deg}(M'_i(h_1, \ldots, h_n)) + \sum\limits_{i=1}^{n + 1 - d'} \text{deg}(M''_i(h_1, \ldots, h_n)) \\
&= \sum\limits_{i=1}^{d'} \text{deg}(M_i(h_1, \ldots, h_n)) + \sum\limits_{i=d' + 1}^{n+1} \text{deg}(M_i(h_1, \ldots, h_n)) \\
&= \sum\limits_{i=1}^{n+1} \text{deg}(M_i(h_1, \ldots, h_n))
\end{align*}
and so the proposition holds by induction.
\end{proof}

A subalgebra $S$ of $\mathscr{N}[A]$ is said to be \emph{homogeneous} if for every positive integer $n$ and $p \in S$ we have $\pi_n(p) \in S$ \cite{bourbaki_algebra}.  The following proposition states that a subalgebra of $\mathscr{N}[A]$ that is generated by a set of homogeneous $A$-polynomials is a homogeneous subalgebra.

\begin{proposition} \cite{lewin} \label{prop:homog_subalgebra}
If $H$ is a set of homogeneous $A$-polynomials, then $\langle H \rangle$ is a homogeneous subalgebra of $\mathscr{N}[A]$.
\end{proposition}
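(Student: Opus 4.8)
The plan is to show that every homogeneous component $\pi_n(p)$ of every element $p \in \langle H \rangle$ again lies in $\langle H \rangle$. Recall from the excerpt that $\langle H \rangle$ consists precisely of the elements $P(h_1, \ldots, h_k)$ with $P \in \mathscr{N}[X_1, \ldots, X_k]$ and $h_1, \ldots, h_k \in H$. Since $\langle H \rangle$ is a subspace, it suffices to take an arbitrary such element $p = P(h_1, \ldots, h_k)$ and verify that each $\pi_n(p) \in \langle H \rangle$. Writing $P$ as a linear combination of monomials, $P = \sum_j c_j M_j$ with $M_j = M_j(X_1, \ldots, X_k)$, we get $p = \sum_j c_j\, M_j(h_1, \ldots, h_k)$, and Proposition~\ref{prop:eval_degree} tells us that each $M_j(h_1, \ldots, h_k)$ is itself a \emph{homogeneous} $A$-polynomial, of degree $\sum_{i=1}^{k} \deg\big((\Sigma(M_j))(i)\text{-th argument}\big)$ — more precisely of degree $\sum_i \deg(h_{(\Sigma(M_j))(i)})$, since the $i$-th leaf of $M_j$ (reading left to right) is the variable $X_{(\Sigma(M_j))(i)}$ and gets the polynomial $h_{(\Sigma(M_j))(i)}$ substituted into it.

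With that in hand, the homogeneous component $\pi_n(p)$ is simply the sum of those $c_j M_j(h_1,\ldots,h_k)$ whose (well-defined, single) degree equals $n$; all other terms contribute nothing to degree $n$. Hence $\pi_n(p) = \sum_{j : d_j = n} c_j\, M_j(h_1, \ldots, h_k)$, where $d_j$ denotes the degree of the homogeneous polynomial $M_j(h_1,\ldots,h_k)$. But this is visibly an element of $\langle H \rangle$: it is $Q(h_1, \ldots, h_k)$ for the polynomial $Q = \sum_{j : d_j = n} c_j M_j \in \mathscr{N}[X_1, \ldots, X_k]$. Therefore $\pi_n(p) \in \langle H \rangle$, and since $n$ and $p$ were arbitrary, $\langle H \rangle$ is homogeneous.

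The only genuine subtlety — and the one point I would be careful to spell out — is the passage from ``$P$ is a linear combination of monomials $M_j$'' to ``$p = P(h_1,\ldots,h_k)$ decomposes as a sum of homogeneous pieces indexed by the degrees $d_j$.'' This is where Proposition~\ref{prop:eval_degree} does the real work: without it one only knows $M_j(h_1,\ldots,h_k)$ is some polynomial, not that it is homogeneous, and the clean grouping-by-degree argument would collapse. It is also worth noting explicitly that distinct monomials $M_j$ may produce evaluations $M_j(h_1,\ldots,h_k)$ of the \emph{same} degree (and indeed with overlapping monomial terms), but this causes no difficulty: we are only forming a finite sum within the subspace $\langle H \rangle$, so coincidences among degrees or cancellation among terms are harmless. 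Everything else is bookkeeping, and the proof is short.

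A brief remark on why this suffices despite not directly manipulating the monomial-term expansion of $p$: the grading $\mathscr{N}[A] = \bigoplus_n \mathscr{H}_n[A]$ is exactly the one induced by $\deg$ on $A$-monomials, so the homogeneous component $\pi_n$ of a sum of homogeneous polynomials of various degrees picks out the partial sum of those of degree $n$ — this is just the definition of a direct sum decomposition — and no finer analysis of how the $M_j(h_1,\ldots,h_k)$ expand into $A$-monomials is needed.
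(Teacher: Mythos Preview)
Your proof is correct and follows essentially the same route as the paper's: write $P = \sum_j c_j M_j$, invoke Proposition~\ref{prop:eval_degree} to see that each $M_j(h_1,\ldots,h_k)$ is homogeneous, and observe that $\pi_n(P(h_1,\ldots,h_k)) = P'(h_1,\ldots,h_k)$ where $P'$ is the partial sum over those $j$ with $\deg(M_j(h_1,\ldots,h_k)) = n$. The paper's version is terser (it omits the explicit citation of Proposition~\ref{prop:eval_degree} and your closing commentary), but the argument is the same.
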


\begin{proof}
Let $P = P(X_1, \ldots, X_n)$ be a nonzero polynomial and let $h_1, \ldots, h_n$ be elements of $H$.  Assume $n$ is a positive integer such that 
\[ \pi_n(P(h_1, \ldots, h_n)) \neq 0 \,. \]  
We have $P = \sum\limits_{i=1}^l c_i M_i$ for some nonzero $c_i \in \mathbb{F}$ and distinct monomials $M_1, \ldots, M_l$.  By re-indexing if necessary, we may assume that $s$ with $1 \leq s \leq l$ is such that $ \text{deg}(M_i(h_1, \ldots, h_n)) = n $ if and only if $1 \leq i \leq s$.  If $P' = \sum\limits_{i=1}^s c_i M_i$, then we have
\[ \pi_n(P(h_1, \ldots, h_N)) = P'(h_1, \ldots, h_n) \in \langle H \rangle \,. \]
\end{proof}

A direct sum decomposition of each homogeneous component $\mathscr{H}_n[A]$ of $\mathscr{N}[A]$ is afforded to us by the product type function $\Pi$.  If $n$ is a postive integer, then for any $t$ in $\mathscr{M}[X]$ with $\text{deg}(t) = n$, we denote by $\mathscr{H}_{n,t}[A]$ the subspace of $\mathscr{H}_n[A]$ generated by the $A$-monomials $m \in \mathscr{M}[A]$ with $\Pi(m) = t$.  If, for each positive integer $n$, $\mathscr{M}_n$ is the set of degree-$n$ elements of $\mathscr{M}[X]$, then we have
\begin{equation*} 
\mathscr{H}_n[A] = \bigoplus_{t' \in \mathscr{M}_n} \mathscr{H}_{n,t'}[A] \,. 
\end{equation*}
This direct sum decomposition will be referred to as the \emph{product type direct sum decomposition} of $\mathscr{H}_n[A]$.  
\\

To illustrate, if $t_1 = ((X, X), X)$ and $t_2 = ((X, (X, X)), X)$, then
\[ p_3 = ((z_2, z_2), z_1) - 4 ((z_1, z_2), z_1) \]
is an element of $\mathscr{H}_{3, t_1}[Z]$ and
\[ p_4 = 3((z_4, (z_4, z_4)), z_4) + 2((z_2, (z_3, z_1)), z_4) + 3((z_1, (z_2, z_3)), z_4) \]
is an element of $\mathscr{H}_{4, t_2}[Z]$.
\\

We now present a proposition that demonstrates the utility of the embedding described in Theorem \ref{thm:embedding}.  This proposition will play a key role in the proof of Theorem \ref{thm:algebra5}. 

\begin{proposition} \label{prop:algebra1}
Let $M = M(X_1, \ldots, X_n)$ be a monomial and let $p_1, \ldots, p_n \in \mathscr{N}[A]$ be linearly independent.  If 
\[ \gamma: \Pi^{-1}(\{\Pi(M)\}) \subset \mathscr{N}[X_1, \ldots, X_n] \to \mathscr{N}[A] \]
is such that 
\[ \gamma(M') = M'(p_1, \ldots, p_n) \]
for all $M' \in \Pi^{-1}(\{\Pi(M)\})$, then $\gamma$ is injective and 
\[ \gamma(\Pi^{-1}(\{\Pi(M)\})) \]
is a linearly independent subset of $\mathscr{N}[A]$.
\end{proposition}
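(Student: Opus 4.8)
The plan is to argue by induction on $\text{deg}(M)$. Rather than establishing the two conclusions separately, it is convenient to prove the single statement that the family $\bigl(M'(p_1,\ldots,p_n)\bigr)_{M' \in \Pi^{-1}(\{\Pi(M)\})}$ is linearly independent, meaning that every nontrivial $\mathbb{F}$-linear combination of finitely many of its members is nonzero. This yields the proposition at once: if $M' \neq M''$ were to satisfy $\gamma(M') = \gamma(M'')$, then $1 \cdot \gamma(M') - 1 \cdot \gamma(M'') = 0$ would be a nontrivial relation, so $\gamma$ is injective; and the image $\gamma(\Pi^{-1}(\{\Pi(M)\}))$ is then a linearly independent subset of $\mathscr{N}[A]$.

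The crux is the following lemma about multiplication in $\mathscr{N}[A]$: if $U$ and $V$ are linearly independent subsets of $\mathscr{N}[A]$, then the products $(u,v)$ with $u \in U$ and $v \in V$ are pairwise distinct and form a linearly independent set. To prove it, consider a relation $\sum \lambda_{u,v}(u,v) = 0$ involving finitely many $u \in U$, $v \in V$. Writing $w_u = \sum_v \lambda_{u,v} v$ and using bilinearity, the relation becomes $\sum_u (u, w_u) = 0$. Expanding the $u$'s and $w_u$'s in the monomial basis $\mathscr{M}[A]$ and invoking Proposition \ref{prop:magma1} — which says precisely that the $A$-monomials $(m,m')$, $m,m' \in \mathscr{M}[A]$, are pairwise distinct — a comparison of the coefficient of $(m,m')$, for each fixed second factor $m'$, yields $\sum_u \mu_u u = 0$, where $\mu_u$ is the coefficient of $m'$ in $w_u$. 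Linear independence of $U$ forces every $\mu_u$ to vanish, hence $w_u = 0$ for all $u$, and then linear independence of $V$ forces all $\lambda_{u,v} = 0$. Pairwise distinctness of the products follows, since a coincidence $(u,v) = (u',v')$ with $(u,v) \neq (u',v')$ would give a nontrivial relation.

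For the base case $\text{deg}(M) = 1$, equation \eqref{eq:embedding} shows that every element of $\Pi^{-1}(\{\Pi(M)\})$ has degree $1$, so $\Pi^{-1}(\{\Pi(M)\}) = \{X_1,\ldots,X_n\}$ and the family in question is $(p_1,\ldots,p_n)$, linearly independent by hypothesis. For the inductive step, write $M = (M',M'')$ with $\text{deg}(M'), \text{deg}(M'') < \text{deg}(M)$. Since $\Pi = \pi_1 \circ \phi$ is a magma morphism, $\Pi(M) = (\Pi(M'),\Pi(M''))$, which has degree greater than $1$; combining \eqref{eq:embedding} with Proposition \ref{prop:magma1} — applied in $\mathscr{M}[X_1,\ldots,X_n]$ to factor each $N \in \Pi^{-1}(\{\Pi(M)\})$ uniquely as $N = (N_1,N_2)$, and in $\mathscr{M}[X]$ to see that then $\Pi(N_1) = \Pi(M')$ and $\Pi(N_2) = \Pi(M'')$ — one checks that $N \mapsto (N_1,N_2)$ is a bijection of $\Pi^{-1}(\{\Pi(M)\})$ onto $\Pi^{-1}(\{\Pi(M')\}) \times \Pi^{-1}(\{\Pi(M'')\})$. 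Because $\phi_{(p_1,\ldots,p_n)}$ is an algebra homomorphism,
\[ (N_1,N_2)(p_1,\ldots,p_n) = \bigl(N_1(p_1,\ldots,p_n),\, N_2(p_1,\ldots,p_n)\bigr) . \]
By the induction hypothesis the family of the $N_1(p_1,\ldots,p_n)$, indexed by $\Pi^{-1}(\{\Pi(M')\})$, is linearly independent, and likewise for the $N_2(p_1,\ldots,p_n)$; applying the lemma to these two linearly independent sets and transporting the conclusion along the bijection above shows that the family of the $N(p_1,\ldots,p_n)$, $N \in \Pi^{-1}(\{\Pi(M)\})$, is linearly independent, completing the induction.

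I expect the lemma to be the only genuinely substantive point — the assertion that the multiplication of $\mathscr{N}[A]$ introduces no linear dependence between the subspaces spanned by two linearly independent sets. Granting Proposition \ref{prop:magma1}, its proof is a routine coefficient comparison in the monomial basis; everything else is bookkeeping with the recursive structure of $\mathscr{M}[X_1,\ldots,X_n]$ and the definition of $\Pi$.
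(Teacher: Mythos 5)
Your proof is correct and follows essentially the same route as the paper: induction on $\deg(M)$, factoring $\Pi^{-1}(\{\Pi(M)\})$ as a product of the preimages for the two factors of $M$, and reducing to the fact that products of two linearly independent sets in $\mathscr{N}[A]$ are distinct and linearly independent. The only difference is packaging: the paper obtains your key lemma by passing through the linear isomorphism $\mathscr{N}[A]\mathscr{N}[A] \cong \mathscr{N}[A] \otimes \mathscr{N}[A]$ and citing linear independence of tensors, whereas you prove the same fact directly by coefficient comparison in the monomial basis via Proposition \ref{prop:magma1}.
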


\begin{proof}
We will prove the result by induction on $\text{deg}(M)$.  If $\text{deg}(M) = 1$, then 
\[ \Pi^{-1}(\{\Pi(M)\}) = \{X_1, \ldots, X_n\} \,, \]
\[ \gamma(X_i) = p_i \text{ for each $i$} \,, \]
and
\[ \gamma(\Pi^{-1}(\{\Pi(M)\})) = \{p_1, \ldots, p_n\} \,. \]
By assumption, $\{p_1, \ldots, p_n\}$ is a linearly independent set.  It is clear that $\gamma$ is injective.

Assume that for some $n > 1$, the result holds for all $M' = M'(X_1, \ldots, X_n)$ with $1 \leq \text{deg}(M') < n$.  Suppose now that $\text{deg}(M) = n$, and let $M_1$ and $M_2$ be the unique monomials such that $M = (M_1, M_2)$.  For each $i$, let 
\[ \gamma_i : \Pi^{-1}(\{\Pi(M_i)\}) \subset \mathscr{N}[X_1, \ldots, X_n] \to \mathscr{N}[A] \]
be such that 
\[ \gamma_i(M') = M'(p_1, \ldots, p_n) \]
for each $M' \in \Pi^{-1}(\{\Pi(M_i)\})$.  Since 
\[ \Pi^{-1}(\{\Pi(M)\}) = \left\{ (M', M'') : M' \in \Pi^{-1}(\{\Pi(M_1)\}) \text{ and } M'' \in \Pi^{-1}(\{\Pi(M_2)\}) \right\} \,, \]
we may define a function
\[ \phi_M : \Pi^{-1}(\{\Pi(M)\}) \to \mathscr{N}[A] \otimes \mathscr{N}[A] \]
by letting
\[ \phi_M ((M',M'')) = M'(p_1, \ldots, p_n) \otimes M''(p_1, \ldots, p_n) \]
for each $M' \in \Pi^{-1}(\{\Pi(M_1)\})$ and $M'' \in \Pi^{-1}(\{\Pi(M_2)\})$.  

Because $\text{deg}(M_i) < n$ for each $i$, it follows from the induction hypothesis that $\phi_M$ is injective and 
\[ \phi_M(\Pi^{-1}(\{\Pi(M)\})) \]
is a linearly independent subset of $\mathscr{N}[A] \otimes \mathscr{N}[A]$.  If 
\[ \mathscr{N}[A] \mathscr{N}[A] = \text{span}\left(\{(y_1, y_2) : y_1, y_2 \in \mathscr{N}[A]\}\right) \,, \]
then let 
\[ \phi: \mathscr{N}[A] \mathscr{N}[A] \to \mathscr{N}[A] \otimes \mathscr{N}[A] \]
be the unique linear isomorphism such that 
\[ \phi((m_1, m_2)) = m_1 \otimes m_2 \]
for all $m_1, m_2 \in \mathscr{M}[A]$.  We have 
\[ \gamma = \phi^{-1} \circ \phi_M \]
and thus $\gamma$ is injective and 
\[ \gamma(\Pi^{-1}(\{\Pi(M)\})) \]
is a linearly independent subset of $\mathscr{N}[A]$.
\end{proof}

To illustrate Proposition \ref{prop:algebra1}, let $Z = \{z_1, z_2, z_3, z_4\}$, $\mathbb{F} = \mathbb{Q}$, $p_1 = (z_1, z_2)$, and $p_2 = ((z_3, z_3), z_2)$.  If $M = (X_1, X_1)$, then
\[ \Pi^{-1}(\Pi(M)) = \{ (X_1, X_1), (X_1, X_2), (X_2, X_1), (X_2, X_2) \} \]
and $\gamma(\Pi^{-1}(\Pi(M)))$ consists of the elements 
\[ ((z_1, z_2), (z_1, z_2)) \,, \] 
\[ ((z_1, z_2), ((z_3, z_3), z_2)) \,, \]
\[ (((z_3, z_3), z_2), (z_1, z_2)) \,, \] 
and
\[ (((z_3, z_3), z_2), ((z_3, z_3), z_2)) \]
which are all distinct and linearly independent by Proposition \ref{prop:algebra1}.
\\

\section{Algebraic Independence}

Let $\mathscr{A}$ be an $\mathbb{F}$-algebra.  A subset $\alpha$ of $\mathscr{A}$ will be called \emph{algebraically independent} if for any distinct $y_1, \ldots, y_n \in \alpha$ and any polynomial $P = P(X_1, \ldots, X_n)$, 
\[ P(y_1, \ldots, y_n) = 0 \text{ implies } P = 0. \]
By considering homogeneous degree-$1$ polynomials, we see that every algebraically independent set is also linearly independent. 

The significance of algebraic independence is the following:

\begin{thm} \label{thm:ai_meaning}
A nonempty subset $\alpha$ of an $\mathbb{F}$-algebra $\mathscr{A}$ is algebraically independent if and only if the algebra homomorphism 
\[ \psi : \mathscr{N}[\alpha] \to \langle \alpha \rangle \]
with $\psi(y) = y$ for all $y \in \alpha$ is an isomorphism. 
\end{thm}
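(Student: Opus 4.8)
The plan is to observe that $\psi$ is always surjective onto $\langle\alpha\rangle$, so that the entire content of the theorem is the equivalence between injectivity of $\psi$ and algebraic independence of $\alpha$. Surjectivity is immediate from the description of $\langle\alpha\rangle$ recalled at the end of Section 1: any element of $\langle\alpha\rangle$ is $P(y_1,\ldots,y_n)$ for some polynomial $P$ and some $y_1,\ldots,y_n\in\alpha$, and this is exactly $\psi$ applied to the element $P(y_1,\ldots,y_n)$ computed inside $\mathscr{N}[\alpha]$. The one bookkeeping device I will need is, for distinct $y_1,\ldots,y_n\in\alpha$, the algebra homomorphism
\[ \rho_{(y_1,\ldots,y_n)} : \mathscr{N}[X_1,\ldots,X_n]\to\mathscr{N}[\alpha], \qquad X_i\mapsto y_i. \]
I claim $\rho_{(y_1,\ldots,y_n)}$ is injective: the underlying magma morphism $\mathscr{M}[X_1,\ldots,X_n]\to\mathscr{M}[\alpha]$ sending $X_i\mapsto y_i$ is injective (degree induction exactly as in the injectivity part of Theorem \ref{thm:embedding}, using that the $y_i$ are distinct and Proposition \ref{prop:magma1}), so it carries the basis $\mathscr{M}[X_1,\ldots,X_n]$ of $\mathscr{N}[X_1,\ldots,X_n]$ to a linearly independent subset of $\mathscr{N}[\alpha]$. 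Finally, $\psi\circ\rho_{(y_1,\ldots,y_n)}$ and the evaluation map $\phi_{(y_1,\ldots,y_n)}:\mathscr{N}[X_1,\ldots,X_n]\to\mathscr{A}$ agree on each $X_i$, hence coincide by the universal property of $\mathscr{N}[X_1,\ldots,X_n]$; equivalently, applying $\psi$ to $P(y_1,\ldots,y_n)$ computed in $\mathscr{N}[\alpha]$ yields $P(y_1,\ldots,y_n)$ computed in $\mathscr{A}$, for every polynomial $P$.

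For the direction ($\Leftarrow$), suppose $\psi$ is an isomorphism, and let $y_1,\ldots,y_n\in\alpha$ be distinct and $P=P(X_1,\ldots,X_n)$ a polynomial with $P(y_1,\ldots,y_n)=0$ in $\mathscr{A}$. By the commutation just noted, $\psi(\rho_{(y_1,\ldots,y_n)}(P))=0$, so injectivity of $\psi$ gives $\rho_{(y_1,\ldots,y_n)}(P)=0$, and then injectivity of $\rho_{(y_1,\ldots,y_n)}$ gives $P=0$; thus $\alpha$ is algebraically independent.

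For the direction ($\Rightarrow$), suppose $\alpha$ is algebraically independent, and let $u\in\mathscr{N}[\alpha]$ with $\psi(u)=0$. Only finitely many elements of $\alpha$, say distinct $y_1,\ldots,y_n$, occur in the finitely many $\alpha$-monomial terms of $u$, so $u$ lies in the subalgebra of $\mathscr{N}[\alpha]$ generated by $y_1,\ldots,y_n$, which is the image of $\rho_{(y_1,\ldots,y_n)}$. Write $u=\rho_{(y_1,\ldots,y_n)}(P)$ for a polynomial $P=P(X_1,\ldots,X_n)$. By the commutation above, $P(y_1,\ldots,y_n)=\psi(u)=0$ in $\mathscr{A}$, so algebraic independence forces $P=0$ and hence $u=0$. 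Therefore $\psi$ is injective, and being also surjective, it is an isomorphism.

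I expect the main obstacle to be purely organizational: carefully relating the abstract free algebra $\mathscr{N}[\alpha]$ — whose free generating set is the (possibly infinite) set $\alpha$ itself — to the free algebras $\mathscr{N}[X_1,\ldots,X_n]$ on finitely many indeterminates that appear in the definition of algebraic independence. Concretely, the only step that is not formal diagram-chasing is establishing injectivity of $\rho_{(y_1,\ldots,y_n)}$ (equivalently, that a subset of a free magma's free generating set generates a free submagma), and even that is the same degree induction used to prove injectivity in Theorem \ref{thm:embedding}.
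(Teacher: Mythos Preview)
Your proof is correct and follows essentially the same route as the paper: both reduce to finitely many generators, factor the evaluation map $\phi_{(y_1,\ldots,y_n)}$ through $\psi$ via an injective map $\mathscr{N}[X_1,\ldots,X_n]\to\mathscr{N}[\alpha]$ (your $\rho_{(y_1,\ldots,y_n)}$ is the inverse of the paper's $\gamma_g\circ\gamma$), and conclude that injectivity of $\psi$ is equivalent to injectivity of every $\phi_{(y_1,\ldots,y_n)}$. If anything, your version is slightly more self-contained, since you actually sketch the degree induction establishing injectivity of $\rho_{(y_1,\ldots,y_n)}$, whereas the paper simply asserts that $\gamma$ is an isomorphism.
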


\begin{proof}
It is clear that $\psi$ is surjective.  We will show that $\alpha$ is algebraically independent if and only if $\psi$ is injective.

For any $y' \in \text{ker} \, \psi$, we have
\[ y' = P(y'_1, \ldots, y'_n) \]
for some polynomial $P = P(X_1, \ldots, X_n)$ and some $y'_1, \ldots, y'_n \in \alpha$.  It follows from this that $\psi$ is injective if and only if for any choice of distinct $y_1, \ldots, y_n \in \alpha$, the restriction of $\psi$ to the subalgebra $\langle y_1, \ldots, y_n \rangle$ of $\mathscr{N}[\alpha]$ is injective.

For distinct $y_1, \ldots, y_n \in \alpha$, let 
\[ \gamma: \langle y_1, \ldots, y_n \rangle \subset \mathscr{N}[A] \to \mathscr{N}[y_1, \ldots, y_n] \]
be the isomorphism with $\gamma(y_i) = y_i$ for all $i$ and let 
\[ \gamma_g : \mathscr{N}[y_1, \ldots, y_n] \to \mathscr{N}[X_1, \ldots, X_n] \]
be the isomorphism induced by the function 
\[ g: \{y_1, \ldots, y_n\} \to \{X_1, \ldots, X_n\} \] 
with $g(y_i) = X_i$ for all $i$ \cite{ismail}.  If 
\[ \phi_{(y_1, \ldots, y_n)} : \mathscr{N}[X_1, \ldots, X_n] \to \langle \alpha \rangle \]
is the homomorphism with $\phi_{(y_1, \ldots, y_n)}(X_i) = y_i$ for all $i$, then we see that $\phi_{(y_1, \ldots, y_n)} \circ \gamma_g \circ \gamma$ is injective if and only if $\phi_{(y_1, \ldots, y_n)}$ is injective.  Since $\phi_{(y_1, \ldots, y_n)} \circ \gamma_g \circ \gamma$ is the restriction of $\psi$ to the subalgebra $\langle y_1, \ldots, y_n \rangle$ of $\mathscr{N}[\alpha]$, we see that $\psi$ is injective if and only if $\phi_{(y_1, \ldots, y_n)}$ is injective.

It follows easily from the definition of algebraic independence that $\phi_{(y_1, \ldots, y_n)}$ is injective for any choice of distinct $y_1, \ldots, y_n \in \alpha$ if and only if $\alpha$ is an algebraically independent set.  
\end{proof}

Theorem \ref{thm:ai_meaning} implies that a subset $\alpha$ of an $\mathbb{F}$-algebra $\mathscr{A}$ is algebraically independent if and only if the subalgebra $\langle \alpha \rangle$ of $\mathscr{A}$ generated by $\alpha$ is free over $\alpha$.  Thus, our definition of algebraic independence is equivalent to the one given in \cite{umirbaev}.  Our definition closely resembles the definition of algebraic independence that is used in ring theory \cite{lang}.
\\

It is now that we present several key results involving algebraic independence in free non-associative algebras.  The first two results are well-known, and are both consequences of the fact that the variety of non-associative algebras possesses what is known as the \emph{Nielsen property} \cite{mikhalev} (see the paragraph following the statement of Theorem \ref{thm:algebra6}).  We give proofs of these results in order to demonstrate our alternative framework for studying free non-associative algebras.  In particular, Proposition \ref{prop:algebra1}, which makes use of the embedding given in Theorem \ref{thm:embedding}, is used in the final step of the proof of Theorem \ref{thm:algebra5}.

\begin{thm} \label{thm:algebra5}
Let $\alpha \subset \mathscr{N}[A]$ be a set of nonzero, homogeneous $A$-polynomials of the same degree.  If $\alpha$ is a linearly independent set, then $\alpha$ is also an algebraically independent set.
\end{thm}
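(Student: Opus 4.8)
The plan is to reduce the problem, in two stages, to the single--product-type situation resolved by Proposition~\ref{prop:algebra1}. Fix distinct $p_1,\dots,p_n\in\alpha$ and a polynomial $P=P(X_1,\dots,X_n)$ with $P(p_1,\dots,p_n)=0$; let $k$ be the common degree of the members of $\alpha$, and suppose toward a contradiction that $P\neq 0$, say $P=\sum_{i=1}^{l}c_iM_i$ with each $c_i\neq 0$ and the $M_i$ distinct monomials. Since $\alpha$ is linearly independent, so is $p_1,\dots,p_n$, and Proposition~\ref{prop:algebra1} is therefore available. \emph{Stage one (reduce to homogeneous $P$).} By Proposition~\ref{prop:eval_degree}, each $M_i(p_1,\dots,p_n)$ is homogeneous of degree $k\,\text{deg}(M_i)$. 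Collecting the $M_i$ by degree and writing $P=\sum_d P_d$, the polynomials $P_d(p_1,\dots,p_n)$ lie in pairwise distinct homogeneous components $\mathscr{H}_{kd}[A]$ (distinct since $k\geq 1$), so $P(p_1,\dots,p_n)=0$ forces $P_d(p_1,\dots,p_n)=0$ for every $d$. Hence we may assume all $M_i$ have a common degree $d$.

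\emph{Stage two (reduce to a single product type).} For $t\in\mathscr{M}_d$ put $P_t=\sum_{i\,:\,\Pi(M_i)=t}c_iM_i$, so $P=\sum_{t\in\mathscr{M}_d}P_t$. Expanding $M_i(p_1,\dots,p_n)$ by multilinearity, each monomial term of it is $M_i$ with its $d$ leaf positions filled by $A$-monomials of degree $k$; since $\Pi$ is a magma morphism and $\text{deg}$ is preserved under $\Pi$ by \eqref{eq:embedding}, such a term has product type obtained from $\Pi(M_i)=t$ by replacing each leaf of $t$ by a binary tree having $k$ leaves. Writing $\mathrm{Blow}_k(t)$ for the set of all trees so obtained from $t$, we get $P_t(p_1,\dots,p_n)\in\bigoplus_{s\in\mathrm{Blow}_k(t)}\mathscr{H}_{kd,s}[A]$. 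The crucial observation is that for distinct $t,t'\in\mathscr{M}_d$ the sets $\mathrm{Blow}_k(t)$ and $\mathrm{Blow}_k(t')$ are disjoint: by induction on $d$, using Proposition~\ref{prop:magma1} to split a tree uniquely into its left and right subtrees and noting that a $k$-fold blow-up of a $j$-leaf tree has exactly $kj$ leaves, one shows that any $s\in\mathrm{Blow}_k(t)$ determines $t$. Given this disjointness, the subspaces $\bigoplus_{s\in\mathrm{Blow}_k(t)}\mathscr{H}_{kd,s}[A]$ are independent inside the product type direct sum decomposition of $\mathscr{H}_{kd}[A]$, so $P(p_1,\dots,p_n)=0$ forces $P_t(p_1,\dots,p_n)=0$ for each $t$. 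Replacing $P$ by some $P_t\neq 0$, we may assume that all $M_i$ share a common product type $t$.

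\emph{Conclusion.} Now $\{M_1,\dots,M_l\}\subseteq\Pi^{-1}(\{t\})=\Pi^{-1}(\{\Pi(M_1)\})$, and Proposition~\ref{prop:algebra1} (applied with $M=M_1$ to the linearly independent tuple $p_1,\dots,p_n$) says that $M'\mapsto M'(p_1,\dots,p_n)$ is injective on $\Pi^{-1}(\{\Pi(M_1)\})$ with linearly independent image. Hence $M_1(p_1,\dots,p_n),\dots,M_l(p_1,\dots,p_n)$ are distinct members of a linearly independent set, so $P(p_1,\dots,p_n)=\sum_i c_iM_i(p_1,\dots,p_n)\neq 0$, contradicting our assumption. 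Therefore $P=0$, and $\alpha$ is algebraically independent.

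I expect the heart of the argument to be stage two: correctly identifying which product types occur among the monomial terms of $P_t(p_1,\dots,p_n)$, and proving that the blow-up sets $\mathrm{Blow}_k(t)$ are pairwise disjoint, so that no cancellation can occur between different product types. Stage one is a routine degree computation, and the last step is a direct appeal to Proposition~\ref{prop:algebra1}; it is also worth disposing of the trivial cases separately (for instance $\text{deg}(P)=1$, where the conclusion follows at once from linear independence of $\alpha$, and the case when $A$ is a singleton).
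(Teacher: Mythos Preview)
Your proof is correct and follows the same two-stage reduction as the paper: first to a homogeneous $P$ via Proposition~\ref{prop:eval_degree}, then to a single product type, and finally an appeal to Proposition~\ref{prop:algebra1}. The only difference is that your $\mathrm{Blow}_k(t)$ disjointness argument re-derives, by essentially the same induction, what the paper has already packaged as Proposition~\ref{prop:differentpt_eval}; citing that proposition would let you skip the blow-up machinery entirely.
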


\begin{proof}
Let $k \geq 1$ be such that every $h \in \alpha$ is a homogeneous $A$-polynomial of degree $k$,  let $h_1, \ldots, h_n$ be distinct elements of $\alpha$, and let $P = P(X_1, \ldots, X_n)$ be a nonzero polynomial.  We will show that $P(h_1, \ldots, h_n)$ is nonzero. 

For some $t \geq 1$, we have
\[ P(X_1, \ldots, X_n) = \sum\limits_{i = 1}^{t} c_i M_i(X_1, \ldots, X_n) \]
where $c_i \in \mathbb{F}$ are nonzero for all $i$ and $M_1, \ldots, M_t$ are distinct monomials.  It follows from the bilinearity of the product operation on $\mathscr{N}[A]$ together with Proposition \ref{prop:eval_degree} that each $M_i(h_1, \ldots, h_n)$ is a homogeneous $A$-polynomial of degree $k(\text{deg}(M_i))$.  By reindexing if necessary, we can assume that some $l$ with $1 \leq l \leq t$ is such that 
\[ \text{deg}(M_1) = \text{deg}(M_2) = \cdots = \text{deg}(M_l) = \text{max}\left(\text{deg}(M_1), \text{deg}(M_2), \ldots, \text{deg}(M_t)\right) \]
and $\text{deg}(M_j) < \text{deg}(M_1)$ for all $l < j \leq t$.  If $P_1 = \sum\limits_{i=1}^l c_i M_i $ and $P_2 = \sum\limits_{i=l + 1}^t c_i M_i $, then we have
\[ \pi_{k(\text{deg}(M_1))}(P_1(h_1, \ldots, h_n)) = P_1(h_1, \ldots, h_n) \]
and 
\[ \pi_{k(\text{deg}(M_1))}(P_2(h_1, \ldots, h_n)) = 0 \,. \]
It suffices to show that $P_1(h_1, \ldots, h_n)$ is nonzero.
\\

Starting now with $P_1$, we can assume (by reindexing if necessary) that some $l'$ with $1 \leq l' \leq l$ is such that
\[ \Pi(M_1) = \Pi(M_2) = \cdots = \Pi(M_{l'}) \]
and $\Pi(M_{j'}) \neq \Pi(M_1)$ for all $j'$ with $l' < j' \leq l$.  It follows from Proposition \ref{prop:differentpt_eval} that no monomial term of $M_{j'}(h_1, \ldots, h_n)$ for any $j'$ with $l' < j' \leq l$ has the same product type as any monomial term of $M_i(h_1, \ldots, h_n)$ for any $i$ with $1 \leq i \leq l'$.  Thus, if $ P'_1 = \sum\limits_{i=1}^{l'} c_i M_i $, then it suffices to show that $P'_1(h_1, \ldots, h_n)$ is nonzero.  Because $M_1, M_2, \ldots, M_{l'}$ are distinct and have the same product type, and because $h_1, \ldots, h_n$ are linearly independent, the fact that $P'_1(h_1, \ldots, h_n)$ is nonzero follows from Proposition \ref{prop:algebra1}.
\end{proof}

\vspace{3mm}

In light of the following theorem, we emphasize that for a nonzero $A$-polynomial $p$, $\pi_{deg(p)}(p)$ is the highest-degree nonzero homogeneous component of $p$.

\begin{thm} \label{thm:algebra6}
Let $\alpha$ be a set of nonzero $A$-polynomials and let
\[ \alpha' = \{ \pi_{deg(p)} (p) : p \in \alpha \} \,. \]
If $\alpha'$ is algebraically independent and if 
\[ \pi_{deg(p_1)}(p_1) \neq \pi_{deg(p_2)}(p_2) \]
for all $p_1, p_2 \in \alpha$ with $p_1 \neq p_2$, then $\alpha$ is algebraically independent. 
\end{thm}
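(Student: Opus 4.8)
The plan is to argue by contradiction: suppose $\alpha$ is not algebraically independent, so there are distinct $p_1, \ldots, p_n \in \alpha$ and a nonzero polynomial $P = P(X_1, \ldots, X_n)$ with $P(p_1, \ldots, p_n) = 0$. Write $h_i = \pi_{\deg(p_i)}(p_i)$ for the top homogeneous components; by hypothesis $h_1, \ldots, h_n$ are distinct and $\alpha' \supseteq \{h_1, \ldots, h_n\}$ is algebraically independent, so $P(h_1, \ldots, h_n) \neq 0$. The idea is that $P(h_1, \ldots, h_n)$ should appear as the ``leading part'' of $P(p_1, \ldots, p_n)$ in a suitable sense, forcing $P(p_1, \ldots, p_n) \neq 0$, a contradiction.

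First I would reduce to the homogeneous case. Write $p_i = h_i + r_i$ where $r_i = p_i - \pi_{\deg(p_i)}(p_i)$ is either zero or has strictly smaller degree than $h_i$; set $d_i = \deg(h_i) = \deg(p_i)$. Expand $P = \sum_j c_j M_j$ into its distinct monomial terms with nonzero coefficients. For each monomial $M_j$, substituting $p_i = h_i + r_i$ and using bilinearity of the product on $\mathscr{N}[A]$ expands $M_j(p_1,\ldots,p_n)$ into a sum of terms, exactly one of which is $M_j(h_1,\ldots,h_n)$; by Proposition~\ref{prop:eval_degree} this term is homogeneous of degree $\sum_{i} d_{k_i}$ summed over the leaf-labels $k_i = (\Sigma(M_j))(i)$ of $M_j$, while every other term arising from $M_j$ (obtained by replacing at least one $h$-factor by the corresponding $r$-factor) is a polynomial all of whose monomial terms have strictly smaller degree than that. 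The delicate point is that monomials $M_j$ of different shape or different leaf-multiset may produce $M_j(h_1,\ldots,h_n)$ of the same degree as $M_{j'}(h_1,\ldots,h_n)$ for $j \neq j'$, so I cannot simply read off a single top homogeneous component; instead I should group the monomials $M_j$ by the value $D(M_j) := \sum_i d_{(\Sigma(M_j))(i)}$ and let $D^* = \max_j D(M_j)$, with $P^* = \sum_{j : D(M_j) = D^*} c_j M_j$ the corresponding sub-sum.

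Then I claim $\pi_{D^*}(P(p_1,\ldots,p_n)) = P^*(h_1,\ldots,h_n)$: every lower-$D$ monomial contributes only in degrees $< D^*$ (again by Proposition~\ref{prop:eval_degree}), and within the top group the ``mixed'' terms (those involving some $r_i$) all land in degrees strictly below $D^*$, so the only degree-$D^*$ contribution is $\sum_{j : D(M_j) = D^*} c_j M_j(h_1,\ldots,h_n) = P^*(h_1,\ldots,h_n)$. Since $P^*$ is a nonzero polynomial (it contains at least the monomials realizing the max) and $\{h_1, \ldots, h_n\}$ is algebraically independent, $P^*(h_1,\ldots,h_n) \neq 0$; hence $\pi_{D^*}(P(p_1,\ldots,p_n)) \neq 0$, so $P(p_1,\ldots,p_n) \neq 0$, the desired contradiction.

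The main obstacle is the bookkeeping in the previous paragraph — precisely verifying that no ``mixed'' term from a top-group monomial can reach degree $D^*$, and that distinct $h$-substituted monomials of the same degree are handled correctly by algebraic independence of $\alpha'$ rather than requiring any finer structure (here the hypothesis that the $h_i$ are \emph{distinct} is essential, so that $P^*$ is genuinely a polynomial in $n$ variables corresponding to $n$ distinct elements of $\alpha'$). A clean way to organize this is to prove, as a lemma, that for any monomial $M$ and any $A$-polynomials $q_1, \ldots, q_n$ with $\deg(q_i) \le e_i$, one has $\pi_{\sum_i e_{(\Sigma(M))(i)}}(M(q_1,\ldots,q_n)) = M(\pi_{e_1}(q_1), \ldots, \pi_{e_n}(q_n))$ provided each $\pi_{e_i}(q_i)$ is the top component — this isolates all the degree-counting into one induction (on $\deg(M)$, mirroring the proof of Proposition~\ref{prop:eval_degree}) and makes the grouping argument above essentially formal.
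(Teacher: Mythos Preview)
Your proposal is correct and follows essentially the same approach as the paper's own proof: both isolate the top-degree homogeneous piece of $P(p_1,\ldots,p_n)$, identify it with a nonzero polynomial evaluated at the distinct leading parts $h_i$, and invoke algebraic independence of $\alpha'$ to conclude it is nonzero. Your grouping by $D(M_j)$ and the paper's reindexing by $\deg\bigl(M_i(\pi_{\deg(p_1)}(p_1),\ldots,\pi_{\deg(p_n)}(p_n))\bigr)$ are the same device, and your explicit handling of the ``mixed'' terms and the proposed lemma make the degree bookkeeping slightly more transparent than the paper's footnoted appeal to bilinearity, but there is no substantive difference.
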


In other words, if no two elements of $\alpha$ have the same highest-degree homogeneous component and if the subset of $\mathscr{N}[A]$ consisting of the highest-degree homogeneous components of elements of $\alpha$ is algebraically independent, then $\alpha$ is algebraically independent.  Such a set $\alpha$ is said to be \emph{reduced} \cite{mikhalev}.  Theorem \ref{thm:algebra6} states that every reduced set is also an algebraically independent set.  In the language of universal algebra, this is what is known as the Nielsen property.

\begin{proof}
Let $P = P(X_1, \ldots, X_n)$ be a nonzero polynomial and let $p_1, \ldots, p_n$ be distinct elements of $\alpha$.  We will show that $P(p_1, \ldots, p_n)$ is nonzero.  

For some $t \geq 1$, $P = \sum\limits_{i = 1}^{t} c_i M_i $ where $M_1, \ldots, M_t$ are distinct monomials and each $c_i \in \mathbb{F}$ is nonzero.  For each $i$ with $1 \leq i \leq t$, 
\[ M_i(p_1, \ldots, p_n) = M_i(\pi_{deg(p_1)}(p_1), \ldots, \pi_{deg(p_n)}(p_n)) + r_i \]
where $r_i$ is a (possibly zero) $A$-polynomial with 
\[ \text{deg}(r_i) < \text{deg}(M_i \left(\pi_{\text{deg}(p_1)}(p_1), \ldots, \pi_{\text{deg}(p_n)}(p_n)\right) \]
if $r_i$ is nonzero.\footnote{This is easily shown using the bilinearity of the product operation on $\mathscr{N}[A]$.}  By re-indexing if necessary, we may assume that some $s \geq 1$ is such that 
\[ \text{deg}\left(M_i\left(\pi_{deg(p_1)}(p_1), \ldots, \pi_{deg(p_n)}(p_n)\right)\right) = \text{deg}(P(p_1, \ldots, p_n)) \]
if and only if $1 \leq i \leq s$.  We have
\[ \pi_{\text{deg}(P(p_1, \ldots, p_n))} (P(p_1, \ldots, p_n)) = \sum\limits_{i = 1}^{s} c_i M_i \left (\pi_{deg(p_1)} (p_1), \ldots, \pi_{deg(p_n)}(p_n) \right) \]
and by algebraic independence of $\alpha'$, 
\[ \pi_{deg(P(p_1, \ldots, p_n))} (P(p_1, \ldots, p_n)) \neq 0 \,. \]
Thus, the highest degree homogeneous component of $P(p_1, \ldots, p_n)$ is nonzero which implies that $P(p_1, \ldots, p_n)$ is nonzero.  
\end{proof}

\vspace{3mm}

The next three results can be viewed, collectively, as a generalization of Kurosh's Theorem \cite{kurosh}.  Indeed, Kurosh's Theorem is a direct corollary of Theorem \ref{thm:algebra8}.

We will say that a subset $S$ of $\mathscr{N}[A]$ is of \emph{bounded degree} if there is a positive integer $N$ such that $deg(p) \leq N$ for all nonzero $p$ in $S$.  If $S$ is nonempty and of bounded degree, then we will denote the smallest such $N$ by $\text{deg}(S)$.  If $S$ is empty, then we define $\text{deg}(S) = 0$.

\begin{thm} \label{thm:algebra7}
Let $\alpha_1 \subset \mathscr{N}[A]$ be an algebraically independent set of homogeneous $A$-polynomials which is of bounded degree.  If $\alpha_2 \subset \mathscr{N}[A]$ is a linearly independent set of homogeneous $A$-polynomials of the same degree $k \geq \text{deg}(S)$ and if
\[ \langle \alpha_1 \rangle \cap \text{span}(\alpha_2) = \{ 0 \} \,, \]
then $\alpha = \alpha_1 \cup \alpha_2$ is an algebraically independent set.
\end{thm}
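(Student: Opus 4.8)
The plan is to reduce to a concrete substitution statement and then run an induction on $\deg(\alpha_1)$. Fix distinct $y_1,\dots,y_m\in\alpha_1$, distinct $z_1,\dots,z_l\in\alpha_2$, and a nonzero polynomial $P=P(X_1,\dots,X_m,Y_1,\dots,Y_l)$; the goal is $P(y_1,\dots,y_m,z_1,\dots,z_l)\neq 0$. Write $P=\sum_i c_iM_i$ with the $M_i$ distinct monomials and the $c_i$ nonzero. By Proposition~\ref{prop:eval_degree} each $M_i(y_1,\dots,z_l)$ is homogeneous, of degree $\delta_i$ equal to the sum of the degrees of its substituted arguments counted with multiplicity; collecting the $i$ with $\delta_i$ maximal and using bilinearity exactly as in the proof of Theorem~\ref{thm:algebra5}, it suffices to treat the case in which all $\delta_i$ equal a common value $D^{*}$. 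Note also that $\{y_1,\dots,y_m,z_1,\dots,z_l\}$ is linearly independent, since $\mathrm{span}(y_1,\dots,y_m)\subseteq\langle\alpha_1\rangle$, $\mathrm{span}(z_1,\dots,z_l)\subseteq\mathrm{span}(\alpha_2)$, and $\langle\alpha_1\rangle\cap\mathrm{span}(\alpha_2)=\{0\}$; this is what makes Proposition~\ref{prop:algebra1} available at the end. A convenient simplifying move, which I would record first, is that one may peel off the elements of $\alpha_2$ one at a time: if $z\in\alpha_2$ then $\alpha_1\cup(\alpha_2\setminus\{z\})$ satisfies the same hypotheses, and one checks from the degree-$k$ part of $\langle\alpha_1\cup(\alpha_2\setminus\{z\})\rangle$ that $z\notin\langle\alpha_1\cup(\alpha_2\setminus\{z\})\rangle$, so it is enough to prove that one can adjoin a single homogeneous polynomial of degree $k$ not lying in a given algebraically independent homogeneous subalgebra.

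For the induction on $D:=\deg(\alpha_1)$, the base case $D=0$ (so $\alpha_1=\emptyset$) is Theorem~\ref{thm:algebra5}. For the inductive step write $\alpha_1=\beta\sqcup\alpha_1'$, where $\beta$ is the set of elements of $\alpha_1$ of degree exactly $D$ and $\deg(\alpha_1')<D$. If $k=D$, then $\beta\cup\alpha_2$ is a linearly independent set of homogeneous polynomials of the single degree $D$ (linear independence because $\mathrm{span}(\beta)\cap\mathrm{span}(\alpha_2)\subseteq\langle\alpha_1\rangle\cap\mathrm{span}(\alpha_2)=\{0\}$), so Theorem~\ref{thm:algebra5} makes $\beta\cup\alpha_2$ algebraically independent. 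Since $\langle\alpha_1\rangle$ is free on $\alpha_1$ by Theorem~\ref{thm:ai_meaning}, no element of $\beta$ lies in $\langle\alpha_1'\rangle$, and a short argument then gives $\langle\alpha_1'\rangle\cap\mathrm{span}(\beta\cup\alpha_2)=\{0\}$; as $\deg(\alpha_1')<D$, the inductive hypothesis applied to $(\alpha_1',\,\beta\cup\alpha_2)$ yields that $\alpha_1'\cup\beta\cup\alpha_2=\alpha$ is algebraically independent. Thus the whole difficulty is concentrated in the case $\deg(\alpha_1)<\deg(\alpha_2)$, i.e.\ adjoining $\alpha_2$ of degree strictly larger than everything in $\alpha_1$.

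In that remaining case I would argue directly on $P_{1}:=\sum_{\delta_i=D^{*}}c_iM_i$. Group the $M_i$ by their \emph{$\alpha_2$-content} $b_i$ (the number of occurrences of $Y$-variables); let $b^{*}$ be the maximal value and $P_1^{*}$ the sub-sum over the $M_i$ with $b_i=b^{*}$. Because every $z_j$ has the single degree $k$, all monomials occurring in $P_1^{*}$ have the same total degree and the same $\alpha_2$-content. As in Theorem~\ref{thm:algebra5}, partition $P_1^{*}$ by product type and use Proposition~\ref{prop:differentpt_eval} to reduce to a single product type; Proposition~\ref{prop:algebra1}, applied to the linearly independent tuple $(y_1,\dots,y_m,z_1,\dots,z_l)$, shows that this surviving contribution is nonzero. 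The heart of the matter is to show that no monomial term coming from $P_1^{*}$ can be cancelled by a monomial term coming from some $M_i$ with $b_i<b^{*}$. This is where \emph{algebraic} (not merely linear) independence of $\alpha_1$ is used: a product of the arguments $y_j$ whose expansion reproduced a monomial term of a $z$-argument would, by Proposition~\ref{prop:eval_degree} together with the freeness of $\langle\alpha_1\rangle$, exhibit a nonzero degree-$k$ element of $\langle\alpha_1\rangle$ inside $\mathrm{span}(\alpha_2)$, contradicting the hypothesis $\langle\alpha_1\rangle\cap\mathrm{span}(\alpha_2)=\{0\}$; the hypothesis $\deg(\alpha_2)\geq\deg(\alpha_1)$ is exactly what is needed to control the degrees in this comparison.

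The step I expect to be the main obstacle is precisely this last one: unlike in Theorem~\ref{thm:algebra5}, the substituted arguments do not all share a common degree, so the product-type bookkeeping of that proof does not by itself keep the terms of different $\alpha_2$-content apart, and one must feed the algebraic independence of $\alpha_1$ into the bookkeeping. An alternative route to this step, which I would keep in reserve, is to fix a total order on $\mathscr M[A]$ that is compatible with degree and multiplicative for $\circ_M$, replace each argument by its leading monomial, and use that (after modifying $z$ by an element of $\langle\alpha_1\rangle$, which does not change algebraic independence of $\alpha_1\cup\{z\}$) a leading monomial of a degree-$k$ argument cannot occur as a proper subtree built from the lower-degree arguments $y_j$, so that the $\alpha_2$-content is recoverable from the leading monomial of $M_i(y_1,\dots,z_l)$ and the leading monomials of the $M_i(y_1,\dots,z_l)$ are pairwise distinct.
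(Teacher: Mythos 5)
Your overall architecture (reduce to the homogeneous case, peel off $\alpha_2$ one element at a time, induct on $\deg(\alpha_1)$ to isolate the case $\deg(\alpha_2)>\deg(\alpha_1)$) is a genuinely different organization from the paper's, and the reductions themselves are sound. But the case you correctly identify as carrying all the difficulty is exactly where the argument breaks, in two places. First, Proposition \ref{prop:differentpt_eval} is only stated (and only true) for substituted elements \emph{of a common degree}, so you cannot use it to separate product-type classes once the arguments $y_1,\dots,y_m,z_1,\dots,z_l$ have mixed degrees: for example $(X_2,X_1)$ and $((X_1,X_1),X_1)$ have different product types, yet with $\deg p_2=2\deg p_1$ they can evaluate to $A$-monomials of the same product type (even to the same $A$-monomial). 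Second, and more seriously, the claimed use of $\langle\alpha_1\rangle\cap\mathrm{span}(\alpha_2)=\{0\}$ to forbid cross-content cancellation does not work at the level of individual monomial terms. Take $\alpha_1=\{a+c,\ a-c\}$ and $\alpha_2=\{(a,a)+(b,b)\}$: all hypotheses of the theorem hold, yet the single $A$-monomial $(a,a)$ is simultaneously a monomial term of $z=(a,a)+(b,b)$ and of the pure-$\alpha_1$ product $(y_1,y_2)$. A shared monomial term between $N(y_1,\dots,y_m)$ and a $z$-argument does not ``exhibit a nonzero element of $\langle\alpha_1\rangle$ inside $\mathrm{span}(\alpha_2)$''; it only says two polynomials overlap in one basis vector. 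So the key claim --- that no term of the top-content part $P_1^{*}$ can be cancelled by lower-content contributions --- is unproved, and the proposed justification is false as stated. The reserve route via leading monomials is the germ of a workable (Gr\"obner/reduced-set style) argument, but the two steps it needs --- that $z$ can be modified modulo $\langle\alpha_1\rangle$ so its leading monomial is not a product of leading monomials of the $y_j$, and that the leading monomials of the $M_i(y_1,\dots,z_l)$ are then pairwise distinct --- are each comparable in difficulty to the theorem itself and are not supplied.

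For comparison, the paper avoids product types entirely here and instead inducts on the degree $d$ of the evaluated homogeneous polynomial. After reducing to monomials $M_i$ with $M_i(h_1,\dots,h_n)$ all homogeneous of degree $d>k$, it writes each $M_i=(M_i',M_i'')$, restricts to those $i$ for which $\deg(M_i'(h_1,\dots,h_n))$ takes a fixed value $d'$ (terms from other values cannot cancel these, by Proposition \ref{prop:magma1}), regroups as $\sum_i(M_i',Q_i)$ with the $M_i'$ distinct, and then extracts the coefficient of a single fixed right-hand $A$-monomial $m$ of degree $d-d'$; that coefficient is a nonzero linear combination $\sum c_i'M_i'(h_1,\dots,h_n)$ of strictly lower evaluated degree, hence nonzero by the induction hypothesis (or by algebraic independence of $\alpha_1$ alone when $d'<k$). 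This ``left factor times a pinned right monomial'' extraction is the device that replaces the term-by-term cancellation analysis your proposal gets stuck on, and it is the missing idea you would need to import to complete your remaining case.
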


\begin{proof}
Let $h_1, \ldots, h_n$ be distinct elements of $\alpha = \alpha_1 \cup \alpha_2$ and let $P = P(X_1, \ldots, X_n)$ be a nonzero polynomial with $P = \sum\limits_{i=1}^t c_i M_i$ for distinct monomials $M_1, \ldots, M_t$ and nonzero $c_1, \ldots, c_t$ in $\mathbb{F}$.  We will show that $P(h_1, \ldots, h_n)$ is nonzero.  

If $h_i \in \alpha_1$ for all $i$ or if $h_i \in \alpha_2$ for all $i$, then the result holds by algebraic independence of $\alpha_1$ and $\alpha_2$. (Algebraic independence of $\alpha_2$ follows from Theorem \ref{thm:algebra5}.)  So, by re-indexing if necessary, suppose that $j$ with $1 \leq j < n$ is such that $h_1, \ldots, h_j$ are elements of $\alpha_1$ and $h_{j+1}, \ldots, h_n$ are elements of $\alpha_2$.  By considering the direct sum decomposition of $\mathscr{N}[A]$ into homogeneous components we may assume that for every $i$, $M_i(h_1, \ldots, h_n)$ is a homogeneous $A$-polynomial of degree $d \geq 1$.  

We will prove the result by induction on $d$.  Because at least one of the $h_i$ is in $\alpha_2$, we can assume that $d \geq k$.  Otherwise, we would have $M_i = M_i(X_1, \ldots, X_j)$ for each $i$, and the fact that $P(h_1, \ldots, h_n)$ is nonzero would then follow from the algebraic independence of $\alpha_1$.  If $d = k$, then the fact that $P(h_1, \ldots, h_n)$ is nonzero follows easily from the algebraic independence of $\alpha_1$, the linear independence of $\alpha_2$, and the assumption that \[ \langle \alpha_1 \rangle \cap \text{span}(\alpha_2) = \{ 0 \}. \]  

Suppose $l > k$ is such that $P(h_1, \ldots, h_n)$ is nonzero if $k \leq d < l$, and assume now that $d = l$.  Since $l > k$, for each $i$ with $1 \leq i \leq t$ we have $M_i = (M'_i, M''_i)$ for some monomials $M'_i$ and $M''_i$.  By re-indexing if necessary, we may assume that $r \geq 1$ is such that 
\[ M'_1(h_1, \ldots, h_n), \ldots, M'_r(h_1, \ldots, h_n) \] 
(which are all homogeneous $A$-polynomials by Proposition \ref{prop:eval_degree}) are all of the same degree $d' \geq 1$ and that 
\[ \text{deg}(M'_j(h_1, \ldots, h_n)) \neq d' \] 
for all $j > r$.  Under this assumption, every $A$-monomial term of 
\[ M_i(h_1, \ldots, h_n) = (M'_i(h_1, \ldots, h_n), M''_i(h_1,\ldots, h_n)) \]
is of the form $(m'_i, m''_i)$ for $A$-monomials $m'_i$ and $m''_i$ such that $\text{deg}(m'_i) = d'$ if and only if $i \leq r$.  It follows that for any $j > r$ and $i \leq r$, no $A$-monomial term of $M_j(h_1, \ldots, h_n)$ is an $A$-monomial term of $M_i(h_1, \ldots, h_n)$.  Thus, it suffices to show that the polynomial $P' = \sum\limits_{i=1}^r c_i M_i$ is such that $P'(h_1, \ldots, h_n)$ is nonzero.

Using bilinearity and re-indexing if necessary, for some polynomials $Q_i$ we may write
\[ P' = \sum\limits_{i=1}^r c_i M_i = \sum\limits_{i=1}^s (M'_i, Q_i) \]
where $s \leq r$, $M'_i \neq M'_j$ for $i \neq j$, and each $Q_i(h_1, \ldots, h_n)$ is a homogeneous polynomial of degree $d - d'$.  We may also assume that for some $u$ with $1 \leq u \leq s$, there is an $A$-monomial $m$ of degree $d - d'$ which is an $A$-monomial term of $Q_i(h_1, \ldots, h_n)$ if and only if $1 \leq i \leq u$.  Using bilinearity, we can now write
\[ P'(h_1, \ldots, h_n) = \sum\limits_{i=1}^s (M'_i(h_1, \ldots, h_n), Q_i(h_1, \ldots, h_n)) =  \left( \left(\sum\limits_{i=1}^u c'_i M'_i(h_1, \ldots, h_n)\right), m \right) + R \,, \]
where each $c'_i \in \mathbb{F}$ is nonzero and $R$ is a homogeneous $A$-polynomial of degree $d$ which shares no common $A$-monomial terms with $\left( \left(\sum\limits_{i=1}^u c'_i M'_i(h_1, \ldots, h_n)\right), m \right)$.  We see that if $P'(h_1, \ldots, h_n) = 0$, then we must have 
\[ \left( \left(\sum\limits_{i=1}^u c'_i M'_i(h_1, \ldots, h_n)\right), m \right) = 0. \]  
But, by the induction hypothesis, 
\[ \sum\limits_{i=1}^u c'_i M'_i(h_1, \ldots, h_n) \neq 0 \] 
and so 
\[ \left( \left(\sum\limits_{i=1}^u c'_i M'_i(h_1, \ldots, h_n)\right), m \right) \neq 0 , \]  
which implies that $P'(h_1, \ldots, h_n)$ is nonzero.  This shows that the result holds if $d = l$, which completes the proof by induction.
\end{proof}

\vspace{3mm}

Theorem \ref{thm:algebra8} gives a situation in which an algebraically independent subset of a subalgebra  $S$ of $\mathscr{N}[A]$ can be extended to an algebraically independent set which generates the entire subalgebra $S$.  The proof of Theorem \ref{thm:algebra8} makes use of the following fact that is easily verified:

\begin{center}
If $\{\alpha_n\}_{n=1}^{\infty}$ is a collection of algebraically independent subsets of an $\mathbb{F}$-algebra $\mathscr{A}$ and if $i \leq j$ implies $\alpha_i \subset \alpha_j$ for all $i$ and $j$, then $\displaystyle{\bigcup\limits_{n=1}^{\infty} \alpha_n}$ is an algebraically independent subset of $\mathscr{A}$.
\end{center}

\vspace{2mm}

\begin{thm} \label{thm:algebra8}
Let $S$ be a homogeneous subalgebra of $\mathscr{N}[A]$ and let $\alpha_S \subset S$ be an algebraically independent set of homogeneous $A$-polynomials which is of bounded degree.  If
\[ \{p \in S : deg(p) \leq \text{deg}(\alpha_S) \} \subset \langle \alpha_S \rangle \,, \]
then there exists an algebraically independent set $\alpha \subset S$ of homogeneous $A$-polynomials such that
\[ \alpha_S \subset \alpha \text{\quad and \quad} \langle \alpha \rangle = S \,. \]
\end{thm}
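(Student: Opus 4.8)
The plan is to build $\alpha$ by a ``layer-by-layer'' induction on degree, adding at each stage a linearly independent set of new homogeneous generators of that degree, and then take a union. Set $N_0 = \text{deg}(\alpha_S)$. For each integer $k$, let $S_k = \{p \in S : \text{deg}(p) \le k\} \cup \{0\}$, which is a subspace of $\mathscr{H}_1[A] \oplus \cdots \oplus \mathscr{H}_k[A]$ since $S$ is homogeneous. The hypothesis says $S_{N_0} \subset \langle \alpha_S \rangle$. I would construct an increasing chain $\alpha_{N_0} \subset \alpha_{N_0+1} \subset \cdots$ of algebraically independent subsets of $S$, consisting of homogeneous $A$-polynomials, with $\alpha_{N_0} = \alpha_S$ and with the invariant that $\langle \alpha_k \rangle \supset S_k$ and every element of $\alpha_k \setminus \alpha_S$ is homogeneous of degree $\le k$. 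Then $\alpha = \bigcup_{k \ge N_0} \alpha_k$ is algebraically independent by the quoted fact about nested unions, it consists of homogeneous $A$-polynomials, it contains $\alpha_S$, and $\langle \alpha \rangle \supset \bigcup_k S_k = S$; since $\alpha \subset S$ forces $\langle \alpha \rangle \subset S$, we get $\langle \alpha \rangle = S$.

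The inductive step from $\alpha_k$ to $\alpha_{k+1}$ is the heart of the argument. Given $\alpha_k$ with $\langle \alpha_k \rangle \supset S_k$, consider the degree-$(k+1)$ homogeneous components: $\langle \alpha_k \rangle \cap \mathscr{H}_{k+1}[A]$ is a subspace of the (possibly infinite-dimensional, but that is harmless) space $S \cap \mathscr{H}_{k+1}[A]$. Choose $\alpha^{(k+1)} \subset S \cap \mathscr{H}_{k+1}[A]$ to be a linearly independent set whose span is a complement of $\langle \alpha_k \rangle \cap \mathscr{H}_{k+1}[A]$ inside $S \cap \mathscr{H}_{k+1}[A]$, and set $\alpha_{k+1} = \alpha_k \cup \alpha^{(k+1)}$. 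By construction $\text{span}(\alpha^{(k+1)}) \cap \langle \alpha_k \rangle = \{0\}$, so Theorem~\ref{thm:algebra7} (applied with $\alpha_1 = \alpha_k$, $\alpha_2 = \alpha^{(k+1)}$, noting $\text{deg}(\alpha_k) \le k + 1$ because all new generators added so far have degree $\le k$, hence $k+1 \ge \text{deg}(\alpha_k)$) shows $\alpha_{k+1}$ is algebraically independent. It remains to check $\langle \alpha_{k+1} \rangle \supset S_{k+1}$: given $p \in S$ with $\text{deg}(p) = k+1$, write $p = p' + p''$ with $p'' = \pi_{k+1}(p)$ and $p' = p - p''$ of degree $\le k$; since $S$ is homogeneous both $p', p'' \in S$, so $p' \in S_k \subset \langle \alpha_k \rangle$ by induction, and $p'' \in S \cap \mathscr{H}_{k+1}[A] = (\langle \alpha_k \rangle \cap \mathscr{H}_{k+1}[A]) \oplus \text{span}(\alpha^{(k+1)}) \subset \langle \alpha_{k+1} \rangle$, whence $p \in \langle \alpha_{k+1} \rangle$.

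The base case requires a small remark: we need $\langle \alpha_S \rangle \supset S_{N_0}$, which is exactly the standing hypothesis, and we need $\alpha_S$ itself algebraically independent, also given. For the very first inductive steps, when $k < N_0$ one might worry, but in fact we only ever run the induction for $k \ge N_0$, so $\text{span}(\alpha_2)$ always has degree $\ge \text{deg}(\alpha_S)$ as required by Theorem~\ref{thm:algebra7}; and as new layers are added the quantity $\text{deg}(\alpha_k)$ grows but never exceeds the current $k$, so the hypothesis $k+1 \ge \text{deg}(\alpha_k)$ in the cited theorem is maintained throughout.

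The main obstacle I anticipate is purely bookkeeping: one must be careful that Theorem~\ref{thm:algebra7} is being applied with hypotheses genuinely satisfied at every stage --- in particular that $\alpha_k$ remains a set of \emph{homogeneous} polynomials (true, since $\alpha_S$ is homogeneous and every $\alpha^{(j)}$ lies in a single $\mathscr{H}_j[A]$), that it is of bounded degree at each finite stage (true, as a finite union of layers each of bounded degree, though the bound grows with $k$), and that the degree inequality $k \ge \text{deg}(\alpha_k \text{ as } \alpha_1)$ holds. There is no genuine difficulty with the linear-algebra complement even if the spaces involved are infinite-dimensional, since complements of subspaces always exist; and the passage to the infinite union is handled by the quoted fact. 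So the proof is a clean induction with Theorem~\ref{thm:algebra7} supplying the single nontrivial step.
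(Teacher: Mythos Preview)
Your proof is correct and follows essentially the same approach as the paper's: a degree-by-degree recursive construction that at each stage extends the current algebraically independent set by a linearly independent complement in the next homogeneous layer, invokes Theorem~\ref{thm:algebra7} to preserve algebraic independence, and then passes to the nested union. The only cosmetic differences are that the paper skips directly to the next degree at which new generators are actually needed (rather than incrementing $k$ by one each time) and phrases the complement in terms of $W_{k_n} = \mathrm{span}\{p \in S : \deg(p) = k_n\}$ rather than $S \cap \mathscr{H}_{k+1}[A]$; your formulation is arguably cleaner on this point.
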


We remark that the recursive construction utilized in the following proof first appeared in Kurosh's paper \cite{kurosh}.  A similar construction was later used by Shirshov \cite{shirshov} to show that every subalgebra of a free Lie algebra is free.  

\begin{proof}  If $\langle \alpha_S \rangle = S$, then we can take $\alpha = \alpha_S$.  Otherwise, assume that $\langle \alpha_S \rangle \neq S$.  We will recursively define a set $\alpha$ that satisfies the theorem.  

Let $k_1 \geq 1$ be smallest such that $deg(p_1) = k_1$ for some $p_1 \in S - \langle \alpha_S \rangle$.  By hypothesis, $k_1 > \text{deg}(\alpha_S)$.  Let
\[ W_{k_1} = \text{span}(\{p \in S : deg(p) = k_1\}) \]
and let $\gamma_{k_1}$ be a basis for $W_{k_1}$.  If 
\[ W_{k_1} \cap \langle \alpha_S \rangle \neq \{0\} \,, \] 
assume that $\gamma_{k_1}$ extends a basis $\beta_{k_1}$ for $W_{k_1} \cap \langle \alpha_S \rangle$ to a basis for $W_{k_1}$.  If 
\[W_{k_1} \cap \langle \alpha_S \rangle = \{0\} \,, \] 
let $\beta_{k_1} = \emptyset$.  Note that in either case, every element of $\gamma_{k_1} - \beta_{k_1}$ is of degree $k_1$.  By Theorem \ref{thm:algebra7}, 
\[ \alpha_{k_1} = (\gamma_{k_1} - \beta_{k_1}) \cup \alpha_S \]
is an algebraically independent set.  We see that for all $p \in S$ with $deg(p) \leq k_1$, $p \in \langle \alpha_{k_1} \rangle$.  
\\

For some $n > 1$ assume that $\alpha_{k_1}, \alpha_{k_2}, \ldots, \alpha_{k_{n-1}}$, where $ 1 \leq k_1 < k_2 < \ldots < k_{n-1}$, are algebraically independent subsets of $S$ consisting of homogeneous $A$-polynomials and are such that
\\
\[ (1) \hspace{.4cm} \alpha_{k_{i}} \subset \alpha_{k_{j}} \text{ if } i \leq j \, \]
\\
and
\\
\[ (2) \hspace{.4cm} \text{ for all } p \in S \text{ with } deg(p) \leq k_{n-1} \,, \,\, p \in \langle \alpha_{k_{n-1}} \rangle \,. \]
\\
If $\langle \alpha_{k_{n-1}} \rangle = S$, then let $k_n = k_{n-1} + 1$ and let $\alpha_{k_n} = \alpha_{k_{n-1}}$.  Otherwise, assume that $\langle \alpha_{k_{n-1}} \rangle \neq S$, and let $k_n > k_{n-1}$ be the smallest integer such that $deg(p) = k_n$ for some $\displaystyle{p \in S - \langle \alpha_{k_{n-1}} \rangle}$.  Let
\[ W_{k_n} = \text{span}(\{p \in S : deg(p) = k_n\}) \]
and let $\gamma_{k_n}$ be a basis for $W_{k_n}$.  If \[ W_{k_n} \cap \langle \alpha_{k_{n-1}} \rangle \neq \{0\} \, \] 
assume that $\gamma_{k_n}$ extends a basis $\beta_{k_n}$ for $W_{k_n} \cap \langle \alpha_{k_{n-1}} \rangle$ to a basis for $W_{k_n}$.  If 
\[W_{k_n} \cap \langle \alpha_{k_{n-1}} \rangle = \{0\} \,, \] 
let $\beta_{k_n} = \emptyset$.  By Theorem \ref{thm:algebra7},
\[ \alpha_{k_n} = (\gamma_{k_n} - \beta_{k_n}) \cup \alpha_{k_{n-1}} \]
is an algebraically independent set.  We also have
\[ \alpha_{k_{n-1}} \subset \alpha_{k_n} \]
and for all $p \in S$ with $deg(p) \leq k_n$, $p \in \langle \alpha_{k_n} \rangle$.
\\

We have a sequence of algebraically independent sets of homogeneous $A$-polynomials such that
\[ \alpha_{k_1} \subset \alpha_{k_2} \subset \ldots \subset \alpha_{k_n} \subset \ldots \,, \]
and so 
\[ \alpha = \bigcup\limits_{n = 1}^{\infty} \alpha_{k_n} \]
is algebraically independent.  Clearly, $\langle \alpha \rangle = S$.  We also have $\alpha_S \subset \alpha_{k_1} \subset \alpha$.

\end{proof}

Letting $\alpha_S = \emptyset$ in the statement of Theorem \ref{thm:algebra8}, we see that every homogeneous subalgebra of $\mathscr{N}[A]$ is free.

\vspace{3mm}

The following theorem extends Theorem \ref{thm:algebra8}.

\begin{thm} \label{thm:algebra8}
Let $S$ be a subalgebra of $\mathscr{N}[A]$, and let $\alpha_S \subset S$ be a set of $A$-polynomials such that
\[ \alpha_{S'} = \{ \pi_{deg(p)} (p) : p \in \alpha_S \} \]
is algebraically independent and
\[ \pi_{deg(p_1)}(p_1) \neq \pi_{deg(p_2)}(p_2) \]
for all $p_1, p_2 \in \alpha_S$ with $p_1 \neq p_2$.  

If $\alpha_S$ is of bounded degree and such that
\[ \{p \in S : deg(p) \leq \text{deg}(\alpha_S) \} \subset \langle \alpha_S \rangle \,, \]
then there exists an algebraically independent set $\alpha \subset S$ such that
\[ \alpha_S \subset \alpha \text{\quad and \quad} \langle \alpha \rangle = S \,. \]
\end{thm}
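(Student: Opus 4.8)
The plan is to reduce to the homogeneous case already established (the previous theorem) by passing to highest-degree components. For nonzero $p\in\mathscr N[A]$ write $\bar p=\pi_{\deg(p)}(p)$, and set
\[ \widehat S=\operatorname{span}\bigl(\{\,\bar p:p\in S,\ p\neq 0\,\}\bigr)\,. \]
Since $\deg((p_1,p_2))=\deg(p_1)+\deg(p_2)$, bilinearity shows the highest-degree component of $(p_1,p_2)$ is $(\bar p_1,\bar p_2)$, so the set of highest-degree components of nonzero elements of $S$ is closed under the product and $\widehat S$ is a subalgebra; being spanned by homogeneous elements, it is a homogeneous subalgebra of $\mathscr N[A]$. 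By construction $\alpha_{S'}=\{\bar p:p\in\alpha_S\}\subset\widehat S$ is an algebraically independent set of homogeneous $A$-polynomials of bounded degree with $\deg(\alpha_{S'})=\deg(\alpha_S)$.

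First I would verify that $\widehat S$ and $\alpha_{S'}$ satisfy the hypotheses of the homogeneous case. If $q\in\widehat S$ is homogeneous of degree $n\leq\deg(\alpha_S)$, then $q=\sum_k c_k\bar p_k$ for some $p_k\in S$ with $\deg(p_k)=n$, so $q=\pi_n\bigl(\sum_k c_kp_k\bigr)$ is the highest-degree component of an element of $S$ of degree $n\leq\deg(\alpha_S)$, hence of $\langle\alpha_S\rangle$ by hypothesis. The reasoning in the proof of Theorem~\ref{thm:algebra6} shows that the highest-degree component of any element of $\langle\alpha_S\rangle$ lies in $\langle\alpha_{S'}\rangle$: for $g_1,\dots,g_m\in\alpha_S$ the top-degree part of $M(g_1,\dots,g_m)$ is $M(\bar g_1,\dots,\bar g_m)$, and such terms combine nontrivially because $\alpha_{S'}$ is algebraically independent and the $\bar g_j$ are distinct (the distinctness hypothesis on $\alpha_S$). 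Thus $\{q\in\widehat S:\deg(q)\leq\deg(\alpha_{S'})\}\subset\langle\alpha_{S'}\rangle$, and the homogeneous case yields an algebraically independent set $\beta\subset\widehat S$ of homogeneous $A$-polynomials with $\alpha_{S'}\subset\beta$ and $\langle\beta\rangle=\widehat S$.

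Next I would lift $\beta$ to a set $\alpha\subset S$. For $b\in\alpha_{S'}$, adjoin to $\alpha$ the unique $p\in\alpha_S$ with $\bar p=b$ (unique by the distinctness hypothesis). For $b\in\beta\setminus\alpha_{S'}$, homogeneous of some degree $n$, write $b=\sum_k c_k\pi_n(p_k)$ with $p_k\in S$ of degree $n$ and adjoin $\tilde b=\sum_k c_kp_k\in S$ (so $\pi_n(\tilde b)=b$ and $\deg(\tilde b)=n$). Then $\alpha_S\subset\alpha$; the set of highest-degree components of the elements of $\alpha$ is exactly $\beta$; and distinct elements of $\alpha$ have distinct highest-degree components, because the lifts of $\alpha_{S'}$-elements have distinct highest-degree components by hypothesis, the lifts of $\beta\setminus\alpha_{S'}$ equal their own highest-degree components, and $\alpha_{S'}$ is disjoint from $\beta\setminus\alpha_{S'}$. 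Hence $\alpha$ is reduced, and Theorem~\ref{thm:algebra6} shows $\alpha$ is algebraically independent.

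It remains to prove $\langle\alpha\rangle=S$, which I expect to be the main obstacle: a subalgebra can carry exactly the right highest-degree components and yet still fail to contain lower-degree elements. The inclusion $\langle\alpha\rangle\subset S$ is clear. For the reverse, argue by strong induction on $\deg(p)$ for $p\in S$. If $\deg(p)\leq\deg(\alpha_S)$, then $p\in\langle\alpha_S\rangle\subset\langle\alpha\rangle$ by hypothesis. If $\deg(p)=n>\deg(\alpha_S)$, then $\bar p\in\widehat S=\langle\beta\rangle$, so, projecting onto the degree-$n$ component and invoking Proposition~\ref{prop:eval_degree}, we may write $\bar p=Q(b_1,\dots,b_m)$ with $b_i\in\beta$ and every monomial term of $Q$ evaluating on $(b_1,\dots,b_m)$ to a homogeneous $A$-polynomial of degree $n$. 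Replacing each $b_i$ by its lift $\tilde b_i\in\alpha$ and putting $q=Q(\tilde b_1,\dots,\tilde b_m)\in\langle\alpha\rangle$, the highest-degree-component computation (again as in the proof of Theorem~\ref{thm:algebra6}, using Proposition~\ref{prop:eval_degree}) gives $\deg(q)=n$ and $\pi_n(q)=Q(b_1,\dots,b_m)=\bar p=\pi_n(p)$. Then $p-q\in S$ has degree strictly less than $n$ (or is zero), so $p-q\in\langle\alpha\rangle$ by the induction hypothesis, whence $p=(p-q)+q\in\langle\alpha\rangle$. This closes the induction. The only genuinely delicate points are that highest-degree components of elements of $\langle\alpha_S\rangle$ land in $\langle\alpha_{S'}\rangle$ and the top-degree matching $\pi_n(q)=\pi_n(p)$ in the final induction; everything else is routine bookkeeping.
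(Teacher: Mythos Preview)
Your approach is essentially the paper's: pass to the homogeneous subalgebra $\widehat S$ spanned by highest-degree components of elements of $S$ (the paper calls it $S'=\langle H\rangle$, which coincides with your span since $H$ is closed under products), apply the homogeneous version of the theorem to extend $\alpha_{S'}$ to a free generating set $\beta$ of $\widehat S$, lift each generator back to $S$, invoke Theorem~\ref{thm:algebra6} for algebraic independence of the lifts, and then run a degree induction (your strong induction is the paper's minimal-counterexample argument) to get $\langle\alpha\rangle=S$. You are in fact more careful than the paper in one place: you verify that $\{q\in\widehat S:\deg(q)\le\deg(\alpha_{S'})\}\subset\langle\alpha_{S'}\rangle$, which the paper simply asserts.

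One small slip: you write that ``the lifts of $\beta\setminus\alpha_{S'}$ equal their own highest-degree components.'' As stated this would mean each $\tilde b$ is homogeneous, which is not what your construction gives. What you need (and what you actually use) is that the highest-degree component of $\tilde b$ is $b$, so distinct elements of $\beta$ lift to elements of $\alpha$ with distinct highest-degree components. With that correction the argument is complete.
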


\begin{proof}
If $\langle \alpha_S \rangle = S$, then we can take $\alpha = \alpha_S$.  Assume that $\langle \alpha_S \rangle \neq S$ and let 
\[ H = \{ \pi_{\text{deg}(p)}(p) : p \in S \text{ and } p \neq 0 \} \]
and $S' = \langle H \rangle$.  Proposition \ref{prop:homog_subalgebra} implies that $S'$ is a homogeneous subalgebra of $\mathscr{N}[A]$.  By Theorem \ref{thm:algebra8}, $\alpha_{S'}$ can be extended to an algebraically independent set $\alpha'$ of homogeneous $A$-polynomials with $\langle \alpha' \rangle = S'$.  For each $h \in \alpha'$, let $f(h) \in S$ be such that
\[ \pi_{\text{deg}(f(h))}(f(h)) = h \]
and such that $f(h) \in \alpha_S$ if $h \in \alpha_{S'}$.  (It is easy to verify that such an $f$ exists.)  Now, let $\alpha = f(\alpha')$.  We have $\alpha_S \subset \alpha$, and Theorem \ref{thm:algebra6} implies that $\alpha$ is an algebraically independent set.  To show that $\langle \alpha \rangle = S$, suppose that $S - \langle \alpha \rangle$ is nonempty and assume that $N$ is the smallest positive integer such that $p \in S - \langle \alpha \rangle$ for some $p$ with $\text{deg}(p) = N$.  It follows easily from the fact that $\langle \alpha' \rangle = S'$ that we must have $N > 1$.  

Let $h_1, \ldots, h_n \in \alpha'$ and the polynomial $P$ be such that
\[ P(h_1, \ldots, h_n) = \pi_{\text{deg}(p)}(p) \,. \]
If $p_1, \ldots, p_n$ are the elements of $\alpha$ with $p_i = f(h_i)$ for all $i$, then we have
\[ p - P(p_1, \ldots, p_n) = r_p \]
where $r_p$ is a nonzero polynomial with $\text{deg}(r_p) < \text{deg}(p) = N$.  Since $p_i \in \alpha$ for all $i$, it follows that 
\[P(p_1, \ldots, p_n) \in \alpha \]
which implies that $r_p \not\in \langle \alpha \rangle$.  This contradicts the minimality of $N$, and so we must have $\langle \alpha \rangle = S$.
\end{proof}

\vspace{3mm}

Applying Theorem \ref{thm:algebra8} with $\alpha_S = \emptyset$ gives us Kurosh's Theorem.

\begin{corollary}[Kurosh's Theorem]\label{kurosh}
For any subalgebra $S$ of $\mathscr{N}[A]$, there exists an algebraically independent set $\alpha \subset \mathscr{N}[A]$ with $\langle \alpha \rangle = S$.
\end{corollary}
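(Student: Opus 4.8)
The plan is to deduce the statement directly from the extended version of Theorem~\ref{thm:algebra8} (the one phrased in terms of a reduced generating set $\alpha_S$), specialized to the case $\alpha_S = \emptyset$; the whole proof amounts to checking that the hypotheses of that theorem are satisfied vacuously or degenerately in this case.

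First I would record that, with $\alpha_S = \emptyset$, the associated set $\alpha_{S'} = \{\pi_{\text{deg}(p)}(p) : p \in \alpha_S\}$ is also empty and hence algebraically independent: the defining implication in the notion of algebraic independence quantifies over tuples of distinct elements of the set, of which there are none. For the same reason the requirement that $\pi_{\text{deg}(p_1)}(p_1) \neq \pi_{\text{deg}(p_2)}(p_2)$ for all distinct $p_1, p_2 \in \alpha_S$ holds vacuously, and $\alpha_S = \emptyset$ is trivially of bounded degree with $\text{deg}(\alpha_S) = 0$ by the convention adopted just before Theorem~\ref{thm:algebra7}.

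Next I would verify the containment hypothesis $\{p \in S : \text{deg}(p) \leq \text{deg}(\alpha_S)\} \subset \langle \alpha_S \rangle$. Since the degree of any nonzero $A$-polynomial is a positive integer, the set $\{p \in S : \text{deg}(p) \leq 0\}$ contains no nonzero element and is therefore contained in $\langle \emptyset \rangle = \{0\}$, so the hypothesis holds.

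Having checked all the hypotheses, I would apply Theorem~\ref{thm:algebra8} to obtain an algebraically independent set $\alpha \subset S$ with $\emptyset \subset \alpha$ and $\langle \alpha \rangle = S$; since $S \subset \mathscr{N}[A]$, this $\alpha$ is the set demanded by the corollary. There is no genuine obstacle in this argument — the only point requiring any care is interpreting the degenerate conventions $\text{deg}(\emptyset) = 0$ and $\langle \emptyset \rangle = \{0\}$ correctly, so that the universally quantified hypotheses of Theorem~\ref{thm:algebra8} are recognized as trivially true.
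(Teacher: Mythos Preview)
Your proposal is correct and is exactly the paper's approach: the paper derives the corollary in a single sentence by applying the extended Theorem~\ref{thm:algebra8} with $\alpha_S = \emptyset$, and you have simply spelled out the routine verification that all of its hypotheses hold vacuously or by the degree conventions in that degenerate case.
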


\section{Concluding Remarks}

As stated in the introduction, a complete description of all Schreier varieties is our main goal.  We are naturally led to the study of free non-associative algebras satisfying certain relations, which may be obtained by forming quotient algebras of the free non-associative algebra.  As a first step, we are interested in studying free non-associative algebras satisfying relations which take the form of a finite set of homogeneous polynomials.  It is hoped that the framework established in this paper will enable us to generate a wealth of new examples of Schreier varieties.  

\section{Acknowledgements}
The author would like to first thank Dr. Elizabeth Jurisich, who served as his thesis advisor.  The developments described in this paper would not have been possible without her patience and support.  The author would also like to thank  Dr. Ben Cox and Dr. Iana Anguelova, who served as thesis committee members.


\end{document}